\theoremstyle{theorem}
\newtheorem{theorem}{Theorem}
\newtheorem{lemma}{Lemma}
\theoremstyle{definition}
\newcounter{mathitem}
\newenvironment{mathitem}
  {\begin{list}{{$(\arabic{mathitem})$}}{
   \setcounter{mathitem}{0}
   \usecounter{mathitem}
   \setlength{\topsep}{0pt plus 2pt minus 0pt}
   \setlength{\parskip}{0pt plus 2pt minus 0pt}
   \setlength{\partopsep}{0pt plus 2pt minus 0pt}
   \setlength{\parsep}{0pt plus 2pt minus 0pt}
   \setlength{\leftmargin}{35pt}
   \setlength{\itemindent}{-15pt}
   \setlength{\itemsep}{0pt plus 2pt minus 0pt}}}
  {\end{list}}
\begin{document}

\title{\bf Hamiltonicity of bi-power of bipartite graphs, for finite and infinite cases
\thanks{Supported by NSFC (11601429, 11671320). E-mail: libinlong@mail.nwpu.edu.cn.}}
\date{}

\author{Binlong Li\\[2mm]
\small Department of Applied Mathematics, Northwestern Polytechnical University,\\
\small Xi'an, Shaanxi 710072, P.R. China} \maketitle

\begin{center}
\begin{minipage}{140mm}
\small\noindent{\bf Abstract:} For a graph $G$, the $t$-th power
$G^t$ is the graph on $V(G)$ such that two vertices are adjacent if
and only if they have distance at most $t$ in $G$; and the $t$-th
bi-power $G_B^t$ is the graph on $V(G)$ such that two vertices are
adjacent if and only if their distance in $G$ is odd at most $t$.
Fleischner's theorem states that the square of every 2-connected
finite graph has a Hamiltonian cycle. Georgakopoulos prove that the
square of every 2-connected infinite locally finite graph has a
Hamiltonian circle. In this paper, we consider the Hamiltonicity of
the bi-power of bipartite graphs. We show that for every connected
finite bipartite graph $G$ with a perfect matching, $G_B^3$ has a
Hamiltonian cycle. We also show that if $G$ is a connected infinite
locally finite bipartite graph with a perfect matching, then $G_B^3$
has a Hamiltonian circle.

\smallskip
\noindent{\bf Keywords:} Bipartite graph; infinite graph;
Hamiltonian cycle; Hamiltonian circle
\end{minipage}
\end{center}

%---------------------
\section{Introduction}
%---------------------

A graph $G$ is Hamiltonian if it has a Hamiltonian cycle, i.e., a
cycle containing all vertices of $G$. The $t$-th power $G^t$ of $G$
is the graph on $V(G)$ such that two vertices are adjacent in $G^t$
if and only if they have distance at most $t$ in $G$. The following
classical theorems concern the Hamiltonicity of the power of graphs:

\begin{theorem}[Fleischner \cite{Fl}]\label{ThFl}
If $G$ is a 2-connected finite graph, then $G^2$ is Hamiltonian.
\end{theorem}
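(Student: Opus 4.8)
The plan is to prove the theorem in two stages, separating a purely structural statement about $G$ from a constructive statement about $G^2$. The bridging concept is that of an \emph{EPS-subgraph}: a connected spanning subgraph $S$ of $G$ whose edge set splits as $S = E \cup P$, where every component of $E$ is an even (Eulerian) graph, $P$ is a linear forest, and each path of $P$ has both endvertices in $V(E)$ and no internal vertex in $V(E)$. The first claim to establish is that every $2$-connected graph contains such a spanning EPS-subgraph; the second is that whenever a connected graph possesses a spanning EPS-subgraph, its square is Hamiltonian. Together these give the result.

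For the existence of a spanning EPS-subgraph I would argue by induction along an \emph{open ear decomposition} $G = C_0 \cup Q_1 \cup \dots \cup Q_k$, which every $2$-connected graph admits, where $C_0$ is a cycle and each $Q_i$ is a path meeting the previously built graph exactly in its two endvertices. The cycle $C_0$ is itself even, so it serves as the base case with $E = C_0$ and $P = \emptyset$. When adding an ear $Q_i$, its two endvertices already lie in the current subgraph: if they lie on $E$, I add $Q_i$ to the path-forest $P$; if an endvertex falls in the interior of an existing path of $P$, I instead merge that path together with $Q_i$ into $E$, using a parity argument to keep all degrees of $E$ even. Verifying that these local moves preserve evenness of $E$, the attachment condition on $P$, and connectedness through every ear is routine bookkeeping, and the spanning property follows because the decomposition exhausts $V(G)$.

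The substantive part, and the step I expect to be the main obstacle, is the construction of a Hamiltonian cycle of $G^2$ from a spanning EPS-subgraph $S = E \cup P$. Here the idea is to contract each even component of $E$ conceptually to a point, so that $P$ organizes these components into a tree, which I would then traverse by a depth-first strategy: entering each even component, running along an Eulerian circuit of it while \emph{picking up} its vertices one at a time, and exploiting the distance-$2$ edges available in $G^2$ both to shortcut past vertices already visited and to splice in the pendant paths of $P$ at their attachment points. Because every path of $P$ meets $E$ only at its two ends, each such path can be inserted as a detour that rejoins the Eulerian tour across a jump of length at most two, and the evenness of $E$ guarantees that the Eulerian circuits close up. The delicate point is to carry out all these local splicings so that every vertex is visited exactly once; this requires a careful case analysis at the branch vertices where an Eulerian circuit, several incident ears, and the tree traversal all interact, and it is precisely the consistency of the pickup at these vertices — avoiding both repetition and omission — that constitutes the real work of the proof.
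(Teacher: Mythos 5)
The paper never proves Theorem~\ref{ThFl}: it is quoted as a known classical result, with the proof residing in Fleischner's original article \cite{Fl}, so your proposal can only be compared with the proofs in the literature. Your overall strategy --- first produce a spanning EPS-subgraph $S=E\cup P$ of the $2$-connected graph $G$, then show that the square of a connected graph admitting such a subgraph is Hamiltonian --- is exactly Fleischner's original two-stage plan, and the second stage (the Euler-tour traversal with distance-$2$ splicings, which you correctly flag as a heavy case analysis) is a sound, if long, route.

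The genuine gap is in the first stage, which you dismiss as ``routine bookkeeping.'' The ear-by-ear induction with your two local moves does not preserve the stated invariants. Parity is the first obstruction: $E$ must have even degree at every vertex, while a path of $P$ together with an attached ear has in general four vertices of odd degree, so ``merging that path together with $Q_i$ into $E$'' is impossible; the merge works only in the special case where both ends of $Q_i$ are interior to the \emph{same} path $R$ of $P$ (then $Q_i$ plus the subpath of $R$ between its ends is a cycle that can be moved into $E$). The killing case is an ear $Q_i$ whose endvertices are interior vertices of two \emph{distinct} paths $R_1,R_2$ of $P$: adding $Q_i$ to $P$ creates a vertex of degree $3$, so $P$ is no longer a linear forest; moving $Q_i\cup R_1\cup R_2$ (or any sub-collection containing $Q_i$) into $E$ leaves odd-degree vertices; and splitting $R_1,R_2$ at the attachment points leaves segments with an endvertex outside $V(E)$, violating your attachment condition. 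The same failure occurs for an ear running from $V(E)$ to the interior of a single path of $P$, and even your first case breaks when an end of $Q_i$ lies in $V(E)$ but is already an endvertex of an existing path of $P$ (the concatenation is then a path of $P$ with an internal vertex in $V(E)$). Since the attachment points of the ears are dictated by the graph, these configurations cannot be avoided by bookkeeping alone. This existence statement is in fact the hard half of Fleischner's theorem; neither his proof nor the later simplifications (\v{R}\'{\i}ha, Georgakopoulos, M\"uttel--Rautenbach) obtain it by a straightforward induction over an arbitrary open ear decomposition --- they prove carefully strengthened statements (for instance, EPS-subgraphs with prescribed behaviour at specified vertices) precisely because the naive invariant cannot be pushed through the ears.
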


\begin{theorem}[Sekanina \cite{Se}]
If $G$ is a connected finite graph of order at least 3, then $G^3$
is Hamiltonian.
\end{theorem}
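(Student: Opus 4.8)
The plan is to reduce the statement to trees and then prove a stronger, induction-friendly claim. First I would fix a spanning tree $T$ of $G$; since $T$ is a spanning subgraph, distances in $T$ dominate those in $G$, so $d_G(x,y)\le d_T(x,y)$ for all $x,y$, whence $T^3$ is a spanning subgraph of $G^3$. It therefore suffices to prove that $T^3$ is Hamiltonian for every tree $T$ on at least $3$ vertices, because a Hamiltonian cycle of $T^3$ is automatically one of $G^3$.

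A direct induction on the assertion ``$T^3$ is Hamiltonian'' does not glue subtrees together, so I would strengthen the statement to the following lemma, to be proved by induction on $|V(T)|$: \emph{for every tree $T$ with at least two vertices and every edge $uv\in E(T)$, the cube $T^3$ has a Hamiltonian path with endpoints $u$ and $v$.} The point of forcing the prescribed endpoints to be \emph{adjacent in $T$} is that this gives exactly enough control on distances to connect the pieces across a deleted edge, and I expect finding this correct strengthening to be the only genuinely delicate part of the argument.

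For the induction, the base case $|V(T)|=2$ is the single edge $uv$. In the inductive step I would delete the edge $uv$, splitting $T$ into the component $T_u$ containing $u$ and the component $T_v$ containing $v$; distances inside each component are unchanged from $T$, so the cube of $T_u$ is exactly the subgraph of $T^3$ induced on $V(T_u)$, and similarly for $T_v$. If both components have at least two vertices, I would choose a neighbour $u'$ of $u$ in $T_u$ and a neighbour $v'$ of $v$ in $T_v$; the induction hypothesis yields a Hamiltonian $u$--$u'$ path of $T_u^3$ and a Hamiltonian $v'$--$v$ path of $T_v^3$. The crucial observation is that $d_T(u',v')=d_T(u',u)+d_T(u,v)+d_T(v,v')=3$, so $u'v'$ is an edge of $T^3$, and concatenating the two paths through this edge produces a Hamiltonian $u$--$v$ path of $T^3$. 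If instead one component is trivial, say $u$ is a leaf (the other case being symmetric), I would pick a neighbour $v'$ of $v$ in $T_v$, take a Hamiltonian $v'$--$v$ path of $T_v^3$ by induction, and prepend $u$; here $d_T(u,v')=2\le 3$, so $uv'$ is again an edge of $T^3$.

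Finally, to obtain a cycle I would apply the lemma to any edge $uv$ of $T$. For $|V(T)|\ge 3$ the resulting Hamiltonian $u$--$v$ path has length at least two, so it does not use the edge $uv$ itself; adding $uv$, which lies in $T^3$ since $d_T(u,v)=1$, closes the path into a Hamiltonian cycle of $T^3$, and hence of $G^3$. The heart of the matter is that once the endpoints are required to be $T$-adjacent, every gluing step only ever needs a jump of tree-distance $2$ or $3$, which is precisely what the cube supplies.
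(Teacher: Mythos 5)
Your proposal is correct, and every step checks out: distances inside a component of $T-uv$ agree with distances in $T$, so the cubes of the components are induced subgraphs of $T^3$; the gluing jumps have tree-distance exactly $3$ (or $2$ in the leaf case); and the closing edge $uv$ cannot lie on a Hamiltonian $u$--$v$ path of length at least two, so it genuinely completes a cycle. Be aware, though, that the paper offers no proof of this statement to compare against: it is quoted as a classical theorem of Sekanina \cite{Se}. What is worth noting is that your argument is essentially the strategy the paper itself deploys for its bipartite analogue. In Lemma \ref{LeFiniteHamiltonian} the author proves the corresponding strengthened statement that for a tree $T$ with perfect matching $M$ and any $xy\in M$, the bi-cube $T_B^3$ has a Hamiltonian $(x,y)$-path, and the Hamilton-laceability theorem of Section 2 likewise begins by reducing $G$ to a spanning tree (there, a spanning tree containing a perfect matching). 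The differences are forced by the bipartite setting: the prescribed endpoints must form a matching edge rather than an arbitrary tree edge, since the bi-cube only joins vertices at odd distance and parity must be preserved; the paper deletes both endpoints $\{x,y\}$ and glues the Hamiltonian paths of all resulting components in series, rather than splitting at a single edge into two pieces as you do; and the paper's lemma additionally tracks how many times the path crosses each non-matching edge, a bookkeeping device needed only for the infinite extension in Section 3. Your version, unburdened by parity constraints and by ends, is correspondingly simpler, and it is the standard proof of Sekanina's theorem.
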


We consider the analogues of the above theorems on bipartite graphs.
We first notice that a bipartite graph is Hamiltonian only if it is
balanced, i.e., its two bipartition sets have the same size.
Generally, the power of a bipartite graph may not be bipartite. In
order to find a graph operation for bipartite graphs, we pose the
bipartite power (or bi-power for short) of graphs.

For a (bipartite or non-bipartite) graph $G$, we define the $t$-th
bi-power $G_B^t$ as the graph on $V(G)$ with edge set
$$E(G^t_B)=\{xy: d_G(x,y)\mbox{ is odd at most }k\},$$
where $d_G(x,y)$ is the distance between $x$ and $y$ in $G$. Note
that $G^2_B=G$, $G^4_B=G^3_B$, etc. It is nature to ask that is
there $k,t$ such that the $t$-th bi-power of every $k$-connected
balanced bipartite graph is Hamiltonian. The answer is negative by
the following construction.

Let $s\geq t$ be even, and let $V_0,V_1,\ldots,V_{s+1}$ be disjoint
sets of vertices such that $|V_0|=|V_{s+1}|>sk/2$ and $|V_i|=k$ for
$1\leq k\leq s$. Let $G$ be the graph on $\bigcup_{i=0}^{s+1}V_i$ by
adding all possible edges between $V_i$ and $V_{i+1}$, $0\leq i\leq
s$. Thus $G$ is $k$-connected. Since the distance between the
vertices in $V_0$ and in $V_{s+1}$ in $G$ is more that $t$, $V_0\cup
V_{s+1}$ is an independence set of $G_B^t$. It follows that $G_B^t$
is not Hamiltonian.

So we need some additional conditions to get Hamiltonian graphs.

\begin{theorem}\label{ThFinite}
  Let $G$ be a connected finite bipartite graph of order at least 4.
  If $G$ has a perfect matching, then $G^3_B$ is Hamiltonian.
\end{theorem}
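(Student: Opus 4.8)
The plan is to reduce the statement to trees and then argue by induction. First I would extend the perfect matching $M$ to a spanning tree: since $G$ is connected and $M$ is a forest, there is a spanning tree $T$ of $G$ with $M\subseteq E(T)$, and $M$ is still a perfect matching of $T$. The key observation is that $T_B^3$ is a spanning subgraph of $G_B^3$. Indeed, for $x,y\in V(G)$ with $d_T(x,y)=1$ we have $xy\in E(T)\subseteq E(G)$, while if $d_T(x,y)=3$ then $d_G(x,y)\le d_T(x,y)=3$, and because $G$ is bipartite and $x,y$ lie in different parts (their $T$-distance being odd) the distance $d_G(x,y)$ is odd; hence $d_G(x,y)\in\{1,3\}$ in either case, so $xy\in E(G_B^3)$. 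Thus it suffices to find a Hamiltonian cycle of $T_B^3$, and the problem is reduced to the case where $G=T$ is a tree with a perfect matching.

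For trees I would induct on $|V(T)|$, proving the stronger statement that for a prescribed $ab\in M$ the graph $T_B^3$ has a Hamiltonian cycle through $ab$ (equivalently, a Hamiltonian $a$--$b$ path). The base case is $T=P_4$, the only tree on four vertices with a perfect matching, whose bi-cube is a $4$-cycle containing both matching edges. For the inductive step I would peel off a pendant matching pair from the end of a longest path: if $v_0v_1\cdots v_k$ is a longest path then $v_0$ is a leaf, $v_0v_1\in M$, and a short argument (a matched vertex is adjacent to at most one leaf, and every non-$v_2$ neighbour of $v_1$ is a leaf) shows $\deg_T(v_1)=2$. Setting $T'=T-\{v_0,v_1\}$ gives a smaller tree with perfect matching $M'=M\setminus\{v_0v_1\}$, and since $v_0,v_1$ are pendant we have $d_{T'}=d_T$ on $V(T')$, so $T'^{\,3}_B$ is the induced subgraph of $T_B^3$ on $V(T')$. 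After applying induction to $T'$, I would reinsert $v_0,v_1$ near the attachment vertex $v_2$ using the three available edges $v_2v_1$ and $v_1v_0$ (both of $T$-distance $1$) and $v_0w$ for a $T$-neighbour $w$ of $v_2$ (of $T$-distance $3$, along $v_0v_1v_2w$): replacing a cycle edge $v_2w$ by the path $v_2v_1v_0w$ splices the new pair in.

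The main obstacle is guaranteeing a legal splice point, i.e. that the smaller Hamiltonian cycle meets $v_2$ in an edge $v_2w$ with $w$ a genuine $T$-neighbour of $v_2$; the distance-$3$ edge $v_0w$ is available only for such $w$. This is exactly where the induction hypothesis must be strengthened so as to control, at the attachment vertex, which incident cycle edges are ``real'' ($T$-)edges. The delicate case is $\deg_T(v_2)=2$, when in $T'$ the vertex $v_2$ becomes a leaf matched to its unique neighbour $v_3$; then $v_0$ can attach in $V(T')$ only to $v_3$, and the naive splice into $v_2$'s non-matching cycle edge (which is forced to have $T$-distance $3$) is impossible. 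I expect to resolve this by expanding the induction around the matching edge $v_2v_3$ instead: one reroutes the segment $\cdots v_2v_3q\cdots$ of the cycle through $v_3v_0v_1q$, which is legal provided the edge leaving $v_3$ lands on a $T$-neighbour of $v_3$, thereby propagating the same requirement one matching edge inward. Carrying this through cleanly amounts to choosing the right invariant along the cycle, and is the technical heart of the argument; as a sanity check, for $P_6$ the cycle $C=v_0v_1v_4v_5v_2v_3v_0$ uses the matching edges together with the three distance-$3$ edges $v_1v_4,\,v_2v_5,\,v_0v_3$.

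Finally, it is convenient to phrase the inductive target in matching terms: a Hamiltonian cycle of $T_B^3$ through all of $M$ is the same as a second perfect matching $M^\ast\subseteq E(T_B^3)$, edge-disjoint from $M$, for which $M\cup M^\ast$ is a single cycle, each edge of $M^\ast$ having $T$-distance $1$ or $3$. Organising the induction on the contracted tree $T/M$ (whose vertices are the matched pairs and whose leaves are precisely the peelable pendant pairs) makes the branch-vertex bookkeeping systematic, and I would expect the degree-$2$ analysis above to extend to branch vertices of higher degree by treating the subtrees hanging at the attachment pair one at a time.
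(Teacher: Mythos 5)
Your reduction to trees is correct and coincides with the paper's first step (extend $M$ to a spanning tree $T$; bipartite parity gives $E(T_B^3)\subseteq E(G_B^3)$), and your peeling setup is sound: on a longest path $v_0v_1\cdots v_k$ one indeed has $v_0v_1\in M$, $\deg_T(v_1)=2$, and $d_{T'}=d_T$ on $V(T')$. The genuine gap is in the inductive step, and it is exactly the point you flag yourself: the strengthened induction hypothesis is never formulated, and without it the induction does not close. With the hypothesis as stated (a Hamiltonian cycle of $(T')_B^3$ through \emph{one} prescribed edge of $M'$), you have no control over the cycle edges at the attachment vertex $v_2$, so when the prescribed edge $ab$ is not near $v_2$ there may be no legal splice at all. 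Your two repairs do not restore an invariant: splicing into the matching edge $v_2v_3$ (replacing it by $v_2v_1v_0v_3$) is always legal but deletes $v_2v_3$ from the cycle, so it is unavailable precisely when $ab=v_2v_3$ is the edge you must keep, and it also destroys the ``cycle through all of $M$'' reformulation of your last paragraph; rerouting at $v_3$ instead requires the cycle edge leaving $v_3$ to land on a $T$-neighbour of $v_3$, which is the same unproven property shifted one matching edge inward, so on a long path the requirement regresses indefinitely rather than terminating. What is needed is an invariant simultaneously strong enough to guarantee a splice point and weak enough to survive the splice; the proposal explicitly leaves this open (``the technical heart of the argument''), and it is not a routine detail --- no obvious local condition works (for instance, even the paper's own cycles contain matched pairs at which every non-matching cycle edge has $T$-distance $3$).

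The paper avoids this difficulty entirely by anchoring the recursion at the prescribed matching edge rather than at a pendant pair (Lemma \ref{LeFiniteHamiltonian}): to build a Hamiltonian $(x,y)$-path of $T_B^3$ with $xy\in M$, delete \emph{both} $x$ and $y$, observe that every component $H$ of $T-\{x,y\}$ inherits a perfect matching and attaches to exactly one of $x,y$, apply induction inside each component to a matching edge $x_iy_i$ chosen so that one endpoint is a $T$-neighbour of $x$ (resp.\ $y$), and concatenate all the resulting paths into a single $(x,y)$-path. Every connecting edge is then of the form (matched partner of a neighbour of $x$)--(another neighbour of $x$), or passes over the pair $\{x,y\}$, so its $T$-distance is $1$ or $3$ automatically; no condition whatsoever is imposed on the recursively obtained paths, which is precisely the control your peeling scheme lacks. (The paper then upgrades this lemma to Hamilton-laceability of $G_B^3$, from which Theorem \ref{ThFinite} follows.) If you wish to salvage your scheme, the statement to aim for is your matching reformulation --- a second perfect matching $M^\ast\subseteq E(T_B^3)$ with $M\cup M^\ast$ a single cycle --- but your single-edge splice cannot prove it, since it removes an $M$-edge from the cycle at every step.
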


We remark that the condition `$G$ has a perfect matching' in Theorem
\ref{ThFinite} cannot be replaced by `$G_B^3$ has a perfect
matching'. Let the bi-star $S_{k,k}$, $k\geq 3$, be the tree with
two vertices of degree $k+1$ and all other vertices of degree 1; and
let $G$ be the graph obtained by subdividing each pendant edge of
$S_{k,k}$ twice. One can check that $G_B^3$ has a perfect matching
but is not Hamiltonian.

Now we turn to the infinite graphs. Thomassen \cite{Th} generalized
Theorem \ref{ThFl} to locally finite graphs with one end. Diestel
\cite{Di05,Di16} launched the ambitious project of extending results
on finite Hamiltonian cycles to Hamiltonian circles in infinite
graphs. Diestel \cite{Di05} then conjectured that the square of any
2-connected locally finite graph has a Hamiltonian circle.
Georgakopoulos \cite{Ge} confirmed Diestel's conjecture. Since it is
necessary to introduce a lot of terminology and notations in order
to state the definition of Hamiltonian circles of infinite graphs,
we refrain from stating the concept explicitly in this introductory
section. Here we present Georgakopoulos's theorem on infinite graphs
concerning our topic. We will explain the concepts in Section 3 (see
also \cite{Di16}, Chapter 8). We apologize for the inconvenience
this may cause.

\begin{theorem}[Georgakopoulos \cite{Ge}]
Suppose that $G$ is an infinite locally finite graph.
\begin{mathitem}
\item If $G$ is 2-connected, then $G^2$ has a Hamiltonian circle.
\item If $G$ is connected, then $G^3$ has a Hamiltonian circle.
\end{mathitem}
\end{theorem}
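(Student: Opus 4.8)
The two statements live in the Freudenthal compactifications $|G^2|$ and $|G^3|$, so no single finite construction can exhibit the circle; the plan is to \emph{reduce to the finite theorems} and then \emph{pass to a limit}. The finite engines I would use are the Hamiltonian-connected strengthenings of the results quoted above: the square of a finite $2$-connected graph on at least three vertices is Hamiltonian-connected (Chartrand, Hobbs, Jung, Kapoor and Nash-Williams), which upgrades Fleischner's Theorem~\ref{ThFl}, and the cube of a finite connected graph on at least three vertices is Hamiltonian-connected, a standard strengthening of Sekanina's theorem \cite{Se}. Fix an exhaustion $S_1\subseteq S_2\subseteq\cdots$ of $V(G)$ by finite connected vertex sets with $\bigcup_n S_n=V(G)$; local finiteness makes each $G[S_n]$ finite, and by enlarging the $S_n$ to closed neighbourhoods one tries to arrange that the finite pieces we work in inherit the $2$-connectedness (resp. connectedness) needed to run the finite engine.

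For the building block, consider the ``annulus'' $A_n=G[S_{n+1}\setminus S_{n-1}]$ together with the finitely many vertices of $S_n$ through which the circle must cross. Using Hamiltonian-connectedness inside each finite region, I would produce a spanning system of disjoint paths of $A_n^2$ (resp. $A_n^3$) whose endpoints are prescribed vertices of the separating sets, chosen so that the endpoints on $S_n$ match those supplied by $A_{n-1}$. Concatenating the path-systems of consecutive annuli along the shared separator vertices yields, for each $n$, a finite family of disjoint paths of $G^2$ (resp. $G^3$) that spans $S_n$, enters and leaves each infinite component of $G-S_n$ exactly once, and extends the family chosen for $S_{n-1}$.

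Now apply a compactness argument. By local finiteness each separator $S_n$ is crossed in only finitely many ways, so K\"onig's Infinity Lemma selects a nested, mutually compatible sequence of these finite path-families; their union $C$ is a subgraph of $G^2$ (resp.\ $G^3$) that spans $V(G)$ and in which, the arc-endpoints having receded into the ends, every vertex has degree exactly $2$ and the components are double rays. Passing to the closure $\overline{C}$ in $|G^2|$ (resp.\ $|G^3|$), the tails of these double rays converge to ends; the construction is arranged so that at each end exactly two tails meet, so that $\overline{C}$ becomes a homeomorphic image of $S^1$, that is, a \emph{circle} in the sense of \cite{Di16}, Chapter~8, containing every vertex and every end. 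This $\overline{C}$ is the desired Hamiltonian circle.

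The crux, and the step I expect to cause the most trouble, is the matching at the ends: one must guarantee that $\overline{C}$ is a \emph{single} circle through \emph{all} ends, rather than a disjoint union of several circles, a structure with a double ray whose two tails run to the same end (creating a branch point and destroying the homeomorphism with $S^1$), or one that leaves an end untouched; with an infinite, possibly Cantor-like end space this bookkeeping must be done uniformly across all levels at once. This is where the hypotheses do their work. In (1), $2$-connectedness supplies, in every infinite component beyond $S_n$, the two internally disjoint crossings of $S_n$ that let the circle enter and leave each end region, and the standard strengthening of Fleischner that forces the Hamiltonian cycle through two prescribed edges of $G$ is exactly what makes the pairings compatible from level to level. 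In (2), where only connectedness is assumed, the weaker separation is offset by the longer reach of the third power: one may pass to a \emph{normal} spanning tree $T$ of $G$, whose ends correspond to those of $G$, and build the circle inside $T^3\subseteq G^3$ by an infinite recursive depth-first construction extending Sekanina's, which makes the endpoint bookkeeping, and hence the end-matching, transparent. Verifying that these local pairings globalise to a single $S^1$---equivalently, that $\overline{C}$ is arc-connected and locally looks like an interval at every end---is the technical heart of the argument and would occupy the bulk of the proof.
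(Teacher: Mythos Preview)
This theorem is not proved in the paper; it is quoted as a known result of Georgakopoulos \cite{Ge} to motivate the paper's own Theorem~\ref{ThInfinite} on bi-powers of bipartite graphs. There is therefore no proof here against which to compare your proposal.

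For what it is worth, your outline has the right shape---exhaustion by finite pieces, the Hamiltonian-connected strengthenings of Fleischner and Sekanina as the finite engines, K\"onig's lemma to pass to a limit, and a normal spanning tree for part~(2)---and you correctly locate the difficulty in the end-matching. But the proposal is a plan, not a proof: the sentence ``the construction is arranged so that at each end exactly two tails meet'' is precisely the content of Georgakopoulos's argument, and you have not said how the local pairings at successive separators are chosen so as to force this globally. In \cite{Ge} this requires substantial work, with explicit control over which edges the finite Hamiltonian paths use at each boundary; it does not fall out of compactness alone, and your annulus picture (building spanning path-systems in $G[S_{n+1}\setminus S_{n-1}]$ that inherit $2$-connectedness) is already delicate, since deleting $S_{n-1}$ can destroy $2$-connectedness even after enlarging to closed neighbourhoods.
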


Several other results in the area of Hamiltonian circles of infinite
graphs can be found in \cite{BrYu,CuWaYu,He15,He16,Le}. Our main
result of the paper is the infinite extension of Theorem
\ref{ThFinite}.

\begin{theorem}\label{ThInfinite}
  Let $G$ be a connected infinite locally finite bipartite graph. If
  $G$ has a perfect matching, then $G^3_B$ has a Hamiltonian circle.
\end{theorem}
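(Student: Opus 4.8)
The plan is to deduce Theorem~\ref{ThInfinite} from the finite case, Theorem~\ref{ThFinite}, by an exhaustion-and-limit argument carried out in the Freudenthal compactification $|G_B^3|$. Fix a bipartition $(A,B)$ of $G$ and a perfect matching $M$. Two preliminary observations set the stage. First, since every edge of $G$ joins $A$ to $B$ and any path realising an odd distance also joins $A$ to $B$, the graph $G_B^3$ is itself bipartite with parts $A$ and $B$; moreover it is locally finite, because in a locally finite graph only finitely many vertices lie at distance $1$ or $3$ from a given vertex. Second, every edge of $G_B^3$ corresponds to a $G$-path of length $1$ or $3$, so adding these edges does not disturb the end structure: the identity on $V(G)$ extends to a homeomorphism $|G|\to|G_B^3|$, and we may speak of the ends of $G_B^3$ as the ends of $G$. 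Throughout I would use the Diestel--K\"uhn criterion that a closed, topologically connected, vertex-spanning subset of $|G_B^3|$ is a Hamiltonian circle precisely when every vertex and every end lying on it has degree exactly $2$.

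First I would fix a base vertex and build an increasing sequence of finite vertex sets $S_1\subset S_2\subset\cdots$ with $\bigcup_n S_n=V(G)$ such that each induced subgraph $G[S_n]$ is connected and each $S_n$ is $M$-closed, i.e.\ contains the $M$-partner of each of its vertices. This is possible by growing the sets along a spanning tree of $G$ and adjoining matched partners as we go; local finiteness keeps every $S_n$ finite. By construction $M[S_n]$ is a perfect matching of the finite connected bipartite graph $G[S_n]$. One checks that distances in $G[S_n]$ have the same parity as in $G$ (the bipartition is inherited) and are at least as large, so an odd distance of value at most $3$ inside the piece forces an odd $G$-distance of value at most $3$; hence $(G[S_n])_B^3$ is a subgraph of $G_B^3$, and any Hamiltonian structure produced inside a piece is automatically realised by genuine $G_B^3$-edges.

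The core of the argument is to turn the finitely many Hamiltonian cycles supplied by Theorem~\ref{ThFinite} into one circle. The cycles of $(G[S_n])_B^3$ live on different vertex sets and cannot be intersected directly, so I would instead pass to apex gadgets: for each $n$, contract the exterior $V(G)\setminus S_n$ in a bipartition- and matching-respecting way to a bounded gadget attached along the boundary $\partial S_n$, obtaining a finite connected bipartite graph $\widetilde G_n$ with a perfect matching. Theorem~\ref{ThFinite} then yields a Hamiltonian cycle of $(\widetilde G_n)_B^3$, and deleting the gadget converts it into a system of vertex-disjoint paths in $G_B^3$ that together span $S_n$ and have all their endpoints on $\partial S_n$. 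Because the boundaries recede to infinity, these endpoints are exactly the places where the eventual circle will run out to the ends. Since $G_B^3$ is locally finite there are only finitely many possible restrictions of such path-systems to each $S_m$, so the family of them forms a finitely branching tree under restriction; by K\"onig's Lemma I would extract a sequence whose restrictions stabilise, and the stabilised edges form a spanning union of finite paths and double rays whose closure I would verify, via the degree criterion above, to be a single Hamiltonian circle.

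I expect the main obstacle to be controlling the boundary behaviour so that the limit is one circle rather than a disjoint union of a circle and stray double rays, and so that every end is used with degree exactly $2$. Theorem~\ref{ThFinite} guarantees only \emph{some} Hamiltonian cycle of the gadget, with no control over which boundary vertices the resulting paths end at; to force the path-endpoints into prescribed positions --- and hence to make successive path-systems compatible and to pin each end to degree $2$ --- I anticipate having to engineer the apex gadget so that the cycle is compelled to cross the boundary at the desired vertices, or equivalently to prove a prescribed-endpoint strengthening of Theorem~\ref{ThFinite} for these gadgets. Establishing that degree two at every end is achievable in the limit, and that no vertex or end is skipped, is the delicate topological heart of the proof; the finite Hamiltonicity input together with the rigidity of the perfect matching are exactly what make such boundary control plausible.
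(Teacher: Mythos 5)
There is a genuine gap, and it is exactly the one you flag yourself at the end: you have no mechanism to ensure that the K\"onig-lemma limit of your path systems is a single circle that meets every end exactly once, rather than a disjoint union of circles and double rays, or a curve that passes through some end many times. Theorem~\ref{ThFinite} is too weak an input for this: it supplies \emph{some} Hamiltonian cycle of each finite piece, with no control over how often that cycle crosses any fixed finite cut, and without a uniform bound on cut crossings the ends of the limit subgraph can have arbitrarily large degree, so injectivity at the ends cannot be argued. The paper resolves precisely this point not by ``engineering apex gadgets'' or a prescribed-endpoint strengthening of Theorem~\ref{ThFinite}, but by a quantitative strengthening of the finite statement in a special subgraph: it first builds a \emph{faithful} spanning tree $T$ of $G$ containing the matching $M$ (via a normal spanning tree of the graph obtained by contracting the matching edges; Theorem~\ref{ThNormalTree}, Lemma~\ref{LeNormalFaithful}, Lemma~\ref{FaFaithfulTree}), and then applies Lemma~\ref{LeFiniteHamiltonian}: the bi-cube of a finite tree with a perfect matching has a Hamiltonian cycle crossing every non-matching tree edge \emph{exactly twice}. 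This ``exactly twice'' property, fed into the compactness argument of Theorem~\ref{ThFaithfulHamiltonian}, bounds the degree of every end of the limit spanning subgraph by $3$ (Lemma~\ref{FaEndDegreek}); then the Hamiltonian-curve criterion of K\"undgen--Li--Thomassen (Theorem~\ref{ThHamiltonianCurve}) together with Lemma~\ref{FaCurveCircle} upgrades the curve to a circle, and faithfulness (Lemmas~\ref{FaFaithfulSpanning}, \ref{FaFaithfulCircle}) transfers it to $G_B^3$. That crossing-number control is the missing idea in your outline; without it the ``delicate topological heart'' you defer is not a technicality but the whole difficulty.

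There is also an unacknowledged flaw in the gadget step. In the contracted graph $\widetilde G_n$, two vertices of $S_n$ near the boundary can be at odd distance at most $3$ via a path \emph{through the gadget}, while their distance in $G$ is even or larger than $3$. Hence $(\widetilde G_n)_B^3$ contains edges inside $S_n$ that are not edges of $G_B^3$, and the Hamiltonian cycle provided by Theorem~\ref{ThFinite} may use them; deleting the gadget vertices then leaves a path system that is \emph{not} a subgraph of $G_B^3$, contrary to your claim. (Induced subgraphs $G[S_n]$ avoid this problem, since induced distances only grow and parity is preserved, as you correctly observe --- but then the cycles live on different vertex sets and your limit argument again needs the cut-crossing control it lacks.) A smaller inaccuracy: the identity on $V(G)$ does not extend to a homeomorphism $|G|\to|G_B^3|$, since the edge sets differ; what is true, and what the paper proves as Lemma~\ref{FaFaithfulPower}, is that $G$ is faithful to $G_B^3$, which identifies the two end spaces.
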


The rest of the paper is organized as follows. In Section 2, we
exhibition the proof of Theorem \ref{ThFinite}, with a lemma that
will be also used for our infinite proof. In Section 3, after
introducing the basic terminology and notations, we give some lemmas
and techniques for dealing with the infinite Hamiltonian problems,
following which we complete the proof of Theorem \ref{ThInfinite}.

%----------------------
\section{Finite graphs}
%----------------------

For the purpose of the case of infinite graphs, we first give some
new definitions and a lemma.

Let $G$ be a graph, $A,B\subseteq V(G)$ be disjoint, and
$P=v_0v_1\ldots v_p$ be a nontrivial path of $G$. We say that $P$ is
an \emph{$(A,B)$-path} if $V(P)\cap A=\{v_0\}$ and $V(P)\cap
B=\{v_p\}$; and $P$ is an \emph{$A$-path} if $V(P)\cap
A=\{v_0,v_p\}$. For two disjoint subgraph $H,K$ of $G$, a
$(V(H),V(K))$-path ($V(H)$-path) is also called an $(H,K)$-path
($H$-path). Let $F$ be a path or a cycle, $T$ be a tree of $G$,
$e\in E(T)$ and $T_1,T_2$ be the two components of $T-e$. We say
that $F$ crosses the edge $e$ $k$ times respect to $T$ if $F$
contains $k$ $(T_1,T_2)$-paths.

Now we prove the following lemma.

\begin{lemma}\label{LeFiniteHamiltonian}
If $T$ is a finite tree and $M$ is a perfect matching of $T$, then
for every edge $xy\in M$, $T_B^3$ has a Hamiltonian $(x,y)$-path
crossing every edge $e\in E(T)\backslash M$ exactly twice respect to
$T$.
\end{lemma}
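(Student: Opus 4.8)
The plan is to prove the statement by induction on $|V(T)|$, assembling the path pair by pair along a depth-first traversal of the matched pairs. Contracting the matching $M$ turns $T$ into a tree $T/M$ whose vertices are the matched pairs and whose edges are exactly the non-matching edges $E(T)\setminus M$; I root $T/M$ at the pair $\{x,y\}$. In this language the crossing requirement becomes transparent: a path crosses every non-matching edge exactly twice precisely when each pair's subtree is visited as a single contiguous block, entered once and left once. The whole construction rests on one local observation. If $\{u,v\}\in M$ and a non-matching edge joins $v$ to the vertex $c$ of an adjacent pair $\{c,c'\}\in M$, then $d_T(v,c)=1$ while $d_T(u,c')=3$ (the unique tree-path $u\,v\,c\,c'$ has length $3$), so $u,v,c,c'$ span a $4$-cycle $u\,v\,c\,c'$ in $T_B^3$ whose short side $vc$ has length $1$ and whose long side $uc'$ has length $3$. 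These length-$3$ edges are the jumps that let the path hop from one pair directly into the partner (``far'') vertex of a neighbouring pair.

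For the base case $|V(T)|=2$ the tree is the single edge $xy\in M$ and the required path is $xy$ itself, with no non-matching edge to cross. For the inductive step I root $T/M$ at $\{x,y\}$ and delete $x$ and $y$; the components of $T-\{x,y\}$ are the branches $S_1,\dots,S_m$ hanging below the root pair, each a tree on which $M$ restricts to a perfect matching and each strictly smaller than $T$. Since the distance between any two vertices of a branch is the same in $S_i$ as in $T$, the induction hypothesis applied to $S_i$ and its matched pair $\{c_i,c_i'\}$ supplies a genuine $T_B^3$-path: a Hamiltonian path of $S_i$ between $c_i$ and $c_i'$ crossing each non-matching edge of $S_i$ exactly twice. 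I classify each branch as an $x$-branch or a $y$-branch according to whether its attachment vertex (the endpoint, in the branch, of the non-matching edge joining it to the root pair) is adjacent to $x$ or to $y$.

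I then assemble the global path in the shape
\[
x \;\longrightarrow\; \underbrace{(\,y\text{-branches}\,)}_{\text{far}\,\to\,\text{near}} \;\longrightarrow\; \underbrace{(\,x\text{-branches}\,)}_{\text{near}\,\to\,\text{far}} \;\longrightarrow\; y ,
\]
concatenating the per-branch Hamiltonian paths given by the induction hypothesis and splicing consecutive blocks with the length-$3$ jumps above: from $x$ into the far vertex of the first $y$-branch; between successive branches of one family through the shared apex $x$ or $y$; across the seam from the attachment vertex of the last $y$-branch to the attachment vertex of the first $x$-branch (the jump forced through the matching edge $xy$); and finally from the far vertex of the last $x$-branch back to $y$. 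Each branch is traversed in the direction that makes these joins work: every $y$-branch is entered at its far vertex and left at its attachment vertex, and every $x$-branch is entered at its attachment vertex and left at its far vertex. Degenerate cases with an empty family collapse to a single length-$1$ edge at $x$ or $y$ and cause no trouble.

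The step I expect to be the crux is exactly this splicing. The two families must be oriented oppositely so that every splice is a tree-path of length $1$ or $3$ --- odd and at most $3$, hence a genuine edge of $T_B^3$ --- and the single seam jump between the families is the delicate one: it is forced to pass through the edge $xy$, and an incorrect orientation there yields a tree-distance exceeding $3$ and destroys the construction. Once the orientations are fixed, correctness is immediate. Every join is a genuine $T_B^3$ edge because tree-paths are unique and hence of exact length; the endpoints are $x$ and $y$; and since each branch $S_i$ occupies one contiguous block, the path enters and leaves the branch side of the corresponding non-matching root-edge exactly once, giving two crossings there, while the non-matching edges interior to the branches inherit their two crossings from the induction hypothesis. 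This yields the desired Hamiltonian $(x,y)$-path of $T_B^3$ crossing every edge of $E(T)\setminus M$ exactly twice.
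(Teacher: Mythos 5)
Your proposal is correct and takes essentially the same route as the paper's own proof: the same induction on $|V(T)|$, the same decomposition of $T-\{x,y\}$ into components classified by whether they attach to $x$ or to $y$, and the identical assembly $x \to (\mbox{$y$-branches, far to near}) \to (\mbox{$x$-branches, near to far}) \to y$ spliced by distance-$3$ edges of $T_B^3$, including the seam jump through the matching edge $xy$. Your contraction $T/M$ framing and the explicit verification of the crossing counts and of the length-$3$ jumps are only presentational refinements of the argument the paper gives.
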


\begin{proof}
We use induction on the order of $T$. The assertion is trivially
true if $T$ has only two vertices. So we assume that $|V(T)|\geq 4$.
Since $T$ is a tree and $xy\in E(T)$, every component of $T-\{x,y\}$
has a neighbor of either $x$ or $y$, but not both. Let
$\mathcal{H}^1=\{H_1^1,\ldots,H_k^1\}$ be the set of components of
$G-\{x,y\}$ that have a neighbor of $x$ and
$\mathcal{H}^2=\{H_1^2,\ldots,H_l^2\}$ be the set of components of
$G-\{x,y\}$ that have a neighbor of $y$. For each
$H_i^1\in\mathcal{H}^1$, let $x_i^1y_i^1\in M$ such that $x$
neighbored $y_i^1$; for each $H_i^2\in\mathcal{H}^2$, let
$x_i^2y_i^2\in M$ such that $y$ neighbored $x_i^2$. By induction
hypothesis, $(H_i^1)_B^3$ has a Hamiltonian $(x_i^1,y_i^1)$-path
$P_i^1$ that crosses every edge $e\in E(H_i^1)\backslash M$ exactly
twice respect to $H_i^1$; and $(H_i^2)_B^3$ has a Hamiltonian
$(x_i^2,y_i^2)$-path $P_i^2$ that crosses every edge $e\in
E(H_i^2)\backslash M$ exactly twice respect to $H_i^2$. If
$\mathcal{H}^1=\emptyset$, then $P=xy_1^2P_1^2x_1^2\ldots
y_l^2P_l^2x_l^2y$ is a Hamiltonian $(x,y)$-path of $G_B^3$ that
crosses every edge $e\in E(T)\backslash M$ exactly twice respect to
$T$. The case of $\mathcal{H}^2=\emptyset$ is similar. If neither
$\mathcal{H}^1$ nor $\mathcal{H}^2$ is empty. then
$P=xy_1^2P_1^2x_1^2\ldots y_l^2P_l^2x_l^2y_1^1P_1^1x_1^1\ldots
y_k^1P_k^1x_k^1y$ is a Hamiltonian $(x,y)$-path of $G_B^3$ that
crosses every edge $e\in E(T)\backslash M$ exactly twice respect to
$T$.
\end{proof}

We say that a balanced bipartite graph $G$ is
\emph{Hamilton-laceable} if for any two vertices $x,y$ in distinct
bipartition sets, $G$ has a Hamiltonian $(x,y)$-path (i.e., an
$(x,y)$-path containing all vertices of $G$). The concept
Hamilton-laceability was introduced by Simmons \cite{Si78,Si81}, and
some times it is called Hamilton-biconnectedness (see
\cite{FaMaMaOr,FaMaOr} for examples). Note that every
Hamilton-laceable balanced bipartite graph (apart from $K_2$) is
Hamiltonian. Now we prove the following result, which is stronger
than Theorem \ref{ThFinite}.

\begin{theorem}
If $G$ is a connected finite bipartite graph that has a perfect
matching, then $G_B^3$ is Hamilton-laceable.
\end{theorem}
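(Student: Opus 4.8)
The plan is to reduce the statement to trees and then to bootstrap Lemma~\ref{LeFiniteHamiltonian} (which handles matched endpoints) to all admissible pairs of endpoints. First I would record that $G_B^3$ is bipartite with the \emph{same} bipartition $(X,Y)$ as $G$: since $G$ is bipartite, $d_G(u,v)$ is odd if and only if $u$ and $v$ lie in different parts, so every edge of $G_B^3$ joins $X$ to $Y$. Consequently a Hamiltonian path of the balanced bipartite graph $G_B^3$ must have its two endpoints in different parts, and it suffices to produce a Hamiltonian $(x,y)$-path for arbitrary $x\in X$ and $y\in Y$. Next, since $M$ is a matching it is acyclic, hence contained in a spanning tree $T$ of the connected graph $G$; as $T$ spans $V(G)$, $M$ is a perfect matching of $T$. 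If $uv\in E(T_B^3)$, then $d_T(u,v)$ is odd and at most $3$; an odd tree-distance forces $u$ and $v$ into different parts, so $d_G(u,v)$ is odd as well, and $d_G(u,v)\le d_T(u,v)\le 3$. Hence $uv\in E(G_B^3)$, i.e.\ $T_B^3\subseteq G_B^3$, and it is enough to prove that $T_B^3$ is Hamilton-laceable, which is what I would do.

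For the tree $T$ the case $xy\in M$ is exactly Lemma~\ref{LeFiniteHamiltonian}. For general $x\in X$, $y\in Y$ I would use the unique path $\pi\colon x=w_0,w_1,\ldots,w_m=y$ of $T$ between $x$ and $y$ (with $m$ odd) as a spine of \emph{hubs}, generalizing the two-hub construction in the proof of Lemma~\ref{LeFiniteHamiltonian}, where the two hubs are the endpoints of a single matching edge. Each component $C$ of $T-V(\pi)$ is attached to exactly one hub $w_i$ through a single tree-edge $w_ic$; provided the partner $M(w_i)$ does not lie in $C$, the matching $M$ restricts to a perfect matching of $C$, and Lemma~\ref{LeFiniteHamiltonian} supplies a Hamiltonian $(c,M(c))$-path of $C_B^3$. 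These branch-paths are spliced onto the spine exactly as in the Lemma: one enters a branch at its \emph{far} endpoint $M(c)$ (which has tree-distance $3$ from a neighbouring hub, along $w_{i\pm1}-w_i-c-M(c)$) and leaves it at its \emph{near} endpoint $c$ (tree-distance $1$ from $w_i$), while successive branches at the same hub, and the passage from one hub to the next, are joined by the distance-$1$ spine edges $w_iw_{i+1}$ together with distance-$3$ jumps of the form $c-w_i-c'-M(c')$ over the current hub. Every edge used in this way has odd tree-length at most $3$, hence lies in $T_B^3\subseteq G_B^3$.

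The main obstacle is the bookkeeping that makes this spine traversal close up into a single $(x,y)$-path visiting every vertex exactly once, and in particular the hubs $w_i$ whose matching partner $b_i=M(w_i)$ lies inside a branch rather than on $\pi$. For such a hub the component containing $b_i$ is not perfectly matched, so I would treat $w_i$ together with $b_i$ as a unit: the subtrees hanging below $b_i$ are perfectly matched and attach to $b_i$, so they can be strung using $b_i$ as an auxiliary hub (with $w_ib_i$ a distance-$1$ step) before the spine continues. One then checks the parity along the whole spine---the hubs alternate between $X$ and $Y$, beginning at $x\in X$ and ending at $y\in Y$ since $m$ is odd---so that all the splices have the required odd length, and one treats separately the two boundary hubs $w_0=x$ and $w_m=y$, where the path must begin and end rather than merely pass through. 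Carrying out this orchestration, with its case distinctions according to how each hub is matched and where its branches sit, is the technical heart of the argument; once it is complete the constructed path is the desired Hamiltonian $(x,y)$-path of $T_B^3$, and hence of $G_B^3$.
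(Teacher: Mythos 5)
Your reduction to trees is sound and is essentially the paper's: take a spanning tree $T\supseteq M$ and observe $T_B^3\subseteq G_B^3$ (odd tree-distance forces opposite parts, so parity survives in $G$). The genuine gap is in the tree case, where you explicitly defer ``the orchestration'' of the spine traversal. That deferred step is not routine bookkeeping; it is the entire difficulty, and the repertoire of moves you describe is provably insufficient. Concretely, let $T$ be the path $v_1v_2v_3v_4v_5v_6$ with $M=\{v_1v_2,v_3v_4,v_5v_6\}$, $x=v_2$, $y=v_5$. The spine is $v_2v_3v_4v_5$, with $b_0=v_1$ and $b_3=v_6$ hanging off the two boundary hubs. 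After the forced start $v_2,v_1$, the only unvisited $T_B^3$-neighbour of $v_1$ is $v_4$, and in fact the unique Hamiltonian $(v_2,v_5)$-path of $T_B^3$ is $v_2v_1v_4v_3v_6v_5$: it visits $w_2=v_4$ before $w_1=v_3$, and $b_3=v_6$ before $w_3=v_5$. No sweep that processes hubs in spine order, entering and leaving branches by your near/far pattern, can produce this: your splices (spine edges $w_iw_{i+1}$, distance-$3$ jumps over the \emph{current} hub, the step $w_ib_i$) hit a parity dead-end, because from a near endpoint $c$ or from $b_i$ the next hub $w_{i+1}$ sits at even distance $2$. The path above needs moves you never allow -- a jump $v_1$ to $v_4$ skipping the hub $w_1$, a backward spine edge $v_4v_3$, and an entry into a later hub's branch from an earlier hub ($v_3$ to $v_6$). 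Handling this in general is exactly the case analysis you have not done, and it is where the proof lives.

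The idea you are missing is the one the paper uses: induct on the statement itself (Hamilton-laceability of smaller trees), not merely bootstrap from Lemma~\ref{LeFiniteHamiltonian}. Using only Lemma~\ref{LeFiniteHamiltonian} forces every branch to be entered and left at a matched pair $(c,M(c))$, and that rigidity is precisely what creates the dead-ends. With the induction hypothesis available for smaller trees, the whole construction collapses to two lines: if $xy\in M$, Lemma~\ref{LeFiniteHamiltonian} finishes; otherwise pick any non-matching edge $x'y'$ on the unique $x$--$y$ path of $T$ (one exists), and let $H_1\ni x$, $H_2\ni y$ be the components of $T-x'y'$, both of which inherit perfect matchings. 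If $y'\in V(H_1)$ and $x'\in V(H_2)$, join a Hamiltonian $(x,y')$-path of $(H_1)_B^3$ to a Hamiltonian $(x',y)$-path of $(H_2)_B^3$ by the edge $y'x'$; if $x'\in V(H_1)$ and $y'\in V(H_2)$, join a Hamiltonian $(x,M(x'))$-path of $(H_1)_B^3$ to a Hamiltonian $(M(y'),y)$-path of $(H_2)_B^3$ by $M(x')M(y')$, whose tree-distance is odd and at most $3$. This two-case induction performs automatically all the out-of-order hub visiting that your linear spine sweep cannot; in my example it produces exactly the path $v_2v_1v_4v_3v_6v_5$.
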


\begin{proof}
We use induction on the order of $G$. The assertion is trivial if
$G$ has only two vertices. So we assume that $|V(G)|\geq 4$. If $G$
is not a tree, then it has a spanning tree with a perfect matching,
which can be obtained by taking a perfect matching of $G$ and adding
edges one by one avoiding creating cycles, until no edges can be
added. So we need only consider the case that $G$ is a tree. Let $M$
be a perfect matching of $G$.

Let $x\in X,y\in Y$ be any two vertices, where $X,Y$ are the two
bipartition sets of $G$. We will find a Hamiltonian $(x,y)$-path in
$G_B^3$. Recall that we assume that $G$ is a tree. If $xy\in M$,
then we are done by Lemma \ref{LeFiniteHamiltonian}. So we assume
that $xy\notin M$. It follows that the unique $(x,y)$-path of $G$
contains some edges $e=x'y'\in E(G)\backslash M$, where $x'\in X$
and $y'\in Y$. Thus $G-x'y'$ has exactly two components one of which
contains $x$ and the other contains $y$. Let $H_1,H_2$ be the two
components of $G-x'y'$ containing $x$ and $y$, respectively.

If $y'\in V(H_1)$ and $x'\in V(H_2)$, then by induction hypothesis,
$(H_1)_B^3$ has a Hamiltonian $(x,y')$-path $P_1$ and $(H_2)_B^3$
has a Hamiltonian $(x',y)$-path $P_2$. Thus $P=P_1y'x'P_2$ is an
Hamiltonian $(x,y)$-path of $G_B^3$. If $x'\in V(H_1)$ and $y'\in
V(H_2)$, then let $y'',x''$ be the neighbors of $x',y'$ in $M$,
respectively. By induction hypothesis, $(H_1)_B^3$ has a Hamiltonian
$(x,y'')$-path $P_1$ and $(H_2)_B^3$ has a Hamiltonian
$(x'',y)$-path $P_2$. Note that $x''y''\in E(G_B^3)$, implying that
$P=P_1y''x''P_2$ is a Hamiltonian $(x,y)$-path of $G_B^3$.
\end{proof}

%------------------------
\section{Infinite graphs}
%------------------------

\subsection{Basic terminology and notations}

Now we consider the infinite graphs. We first give the terminology
concerning circles of infinite graphs.

An (infinite) graph $G$ is \emph{locally finite} if every vertex of
$G$ has finite degree. In this section, we always assume that $G$ is
a locally finite graph. A 1-way infinite path is called a
\emph{ray}, and the subrays of a ray are its \emph{tails}. Two rays
of $G$ are \emph{equivalent} if for every finite set $S\subseteq
V(G)$, there is a component of $G-S$ containing tails of both rays.
We write $R_1\cong_GR_2$ if $R_1$ and $R_2$ are equivalent in $G$.
The corresponding equivalence classes of rays are the \emph{ends} of
$G$. We denote by $\varOmega(G)$ the set of ends of $G$.

Let $\alpha\in\varOmega(G)$ and $S\subseteq V(G)$ be a finite set.
We denote by $C_G(S,\alpha)$ the unique component of $G-S$ that
containing a ray (and a tail of every ray) in $\alpha$. We let
$\varOmega_G(S,\alpha)$ be the set of all ends $\beta$ with
$C_G(S,\beta)=C_G(S,\alpha)$. When no confusion occurs, we will
denote $C_G(S,\alpha)$ and $\varOmega_G(S,\alpha)$ by $C(S,\alpha)$
and $\varOmega(S,\alpha)$, respectively.

To built a topological space $|G|$ we associate each edge $uv\in
E(G)$ with a homeomorphic image of the unit real interval $[0,1]$,
where 0,1 map to $u,v$ and different edges may only intersect at
common endpoints. Basic open neighborhoods of points that are
vertices or inner points of edges are defined in the usual way, that
is, in the topology of the 1-complex. For an end $\alpha$ we let the
basic neighborhood
$\widehat{C}(S,\alpha)=C(S,\alpha)\cup\varOmega(S,\alpha)\cup
E(S,\alpha)$, where $E(S,\alpha)$ is the set of all inner points of
the edges between $C(S,\alpha)$ and $S$. This completes the
definition of $|G|$, called the Freudenthal compactification of $G$.
In \cite{Di16} it is shown that if $G$ is connected and locally
finite, then $|G|$ is a compact Hausdorff space.

An \emph{arc} of $G$ a homeomorphic map of the unit interval $[0,1]$
in $|G|$; and a \emph{circle} is a homeomorphic map of the unit
circle $S^1$ in $|G|$. A circle of $G$ is \emph{Hamiltonian} if it
meets every vertex (and then every end) of $G$.

We define a \emph{curve} of $G$ as a continuous map of the unit
interval $[0,1]$ in $|G|$. A curve is \emph{closed} if $0,1$ map to
the same point; and is \emph{Hamiltonian} if it is closed and meets
every vertex of $G$ exactly once. In other words, a Hamiltonian
curve is a continuous map of the unit circle $S^1$ in $|G|$ that
meets every vertex of $G$ exactly once. Note that a Hamiltonian
circle is a Hamiltonian curve but not vice versa.

\subsection{Faithful subgraphs}

For a finite graphs $G$, if $G$ has a spanning subgraph $H$ that is
Hamiltonian, then $G$ itself is Hamiltonian. But this is not true
for infinite graphs, in the meaning that $H$ having a Hamiltonian
circle does not imply $G$ has one. The main reason is that we have
to guarantee injectivity at the ends in Hamiltonian circles. Now we
define a type of subgraphs that are stable on Hamiltonian circles.
We say a subgraph $H$ of $G$ is \emph{faithful} if
\begin{mathitem}
\item every end of $G$ contains a ray of $H$; and
\item for any two rays $R_1,R_2$ of $H$, $R_1\approx_HR_2$ if and only
if $R_1\approx_GR_2$.
\end{mathitem}
If $H\leq G$, then for every finite set $S\in V(H)$, each component
of $H-S$ is contained in a component of $G-S$. Thus the condition
(2) can be replaced by `for any two rays $R_1,R_2$ of $H$,
$R_1\approx_GR_2$ implies $R_1\approx_HR_2$'.

\begin{lemma}\label{FaFaithfulCircle}
Let $H$ be a faithful spanning subgraph of $G$. If $H$ has a
Hamiltonian circle, then $G$ has a Hamiltonian circle.
\end{lemma}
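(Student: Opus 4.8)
The plan is to prove that if $H$ is a faithful spanning subgraph of $G$ and $H$ has a Hamiltonian circle $C$, then this same $C$ serves as a Hamiltonian circle of $G$. The key point is that a Hamiltonian circle of $H$ is a particular kind of topological object in $|H|$, and I want to transfer it to $|G|$. Since $H$ is spanning, $C$ already meets every vertex of $G$; the only things that could go wrong when passing from $|H|$ to $|G|$ concern the ends. So the heart of the matter is to verify that $C$, viewed in $|G|$, is still a homeomorphic image of $S^1$ and still meets every end of $G$ exactly once.

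First I would set up the natural continuous map from $|H|$ to $|G|$: every vertex maps to itself, every edge of $H$ (which is also an edge of $G$) maps to the corresponding edge, and every end $\omega$ of $H$ maps to the unique end of $G$ that contains the rays of $\omega$. Condition (2) of faithfulness says precisely that two rays of $H$ are equivalent in $H$ if and only if they are equivalent in $G$, so inequivalent ends of $H$ map to inequivalent (hence distinct) ends of $G$; this makes the map on ends injective. Condition (1) says every end of $G$ arises from some end of $H$, so the map on ends is surjective. I would then argue this map $\iota:|H|\to|G|$ is a continuous injection, and since $|H|$ is compact (as $H$ is connected locally finite) and $|G|$ is Hausdorff, $\iota$ is a topological embedding onto its image.

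Next I would push the circle through this embedding. The Hamiltonian circle $C$ of $H$ is a homeomorphic image $\sigma:S^1\to|H|$. Composing with $\iota$ gives $\iota\circ\sigma:S^1\to|G|$, which is a continuous injection from a compact space into a Hausdorff space, hence a homeomorphism onto its image — so $\iota(C)$ is a circle in $|G|$. Because $\sigma$ meets every vertex of $H=$ (every vertex of $G$) and every end of $H$, and because $\iota$ is a bijection on vertices and on ends, the image $\iota(C)$ meets every vertex of $G$ exactly once and every end of $G$ exactly once. Thus $\iota(C)$ is a Hamiltonian circle of $G$.

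The main obstacle, and the step that needs genuine care rather than routine checking, is the bijectivity and continuity of $\iota$ at the ends — equivalently, verifying that the topologies match up. Faithfulness was defined exactly to close this gap, so the work is in translating conditions (1) and (2) into the statement that $\iota$ restricts to a bijection $\varOmega(H)\to\varOmega(G)$ and that basic neighborhoods $\widehat{C}_H(S,\omega)$ map into basic neighborhoods $\widehat{C}_G(S,\iota(\omega))$. For continuity at an end one checks that for a finite $S\subseteq V(H)$, the component $C_H(S,\omega)$ lies inside $C_G(S,\iota(\omega))$ (true since $H\leq G$), so preimages of basic neighborhoods in $|G|$ contain basic neighborhoods in $|H|$; I would take care that the edge-sets $E(S,\cdot)$ behave correctly, possibly enlarging $S$ to a finite set separating the relevant ends. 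Once $\iota$ is shown to be a homeomorphism onto its image, the transfer of the circle is immediate.
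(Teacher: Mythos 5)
Your proposal is correct and follows essentially the same route as the paper: the paper likewise defines the end-bijection $\pi:\varOmega(H)\to\varOmega(G)$ from faithfulness, verifies the basic-neighborhood containments $C_H(S,\alpha)\subseteq C_G(S,\pi(\alpha))$, $E_H(S,\alpha)\subseteq E_G(S,\pi(\alpha))$, $\pi(\varOmega_H(S,\alpha))\subseteq\varOmega_G(S,\pi(\alpha))$ for continuity at ends, and then transfers the circle. The only (cosmetic) difference is that you factor the argument through a global embedding $\iota:|H|\to|G|$ and compose, whereas the paper directly defines the new map $\sigma_G$ on $S^1$ by post-composing the circle of $H$ with $\pi$ at ends and the identity elsewhere.
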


\begin{proof}
We define a map $\pi: \varOmega(H)\rightarrow\varOmega(G)$ such that
for the end $\alpha$ of $H$, $\pi(\alpha)$ is the end of $G$
containing all the rays in $\alpha$. By the definition of the
faithful subgraphs, $\pi$ is a bijection between $\varOmega(H)$ and
$\varOmega(G)$ (see also \cite{Ge}). Let $\alpha\in\varOmega(H)$ and
$S\subseteq V(G)$ be finite. Since $H\leq G$, the component
$C_H(S,\alpha)$ is contained in $C_G(S,\pi(\alpha))$. If there is an
end $\beta\in\varOmega_H(S,\alpha)$, then every ray in $\beta$ has a
tail contained in $C_H(S,\alpha)$, which is contained in
$C_G(S,\pi(\alpha))$. This implies that
$\pi(\beta)\in\varOmega_G(S,\pi(\alpha))$. It follows that
$\pi(\varOmega_H(S,\alpha))\subseteq\varOmega_G(S,\pi(\alpha))$.

Now let $\sigma_H: S^1\rightarrow|H|$ be a Hamiltonian circle of
$H$. We define $\sigma_G: S^1\rightarrow|G|$ such that
$$\sigma_G(p)=\left\{\begin{array}{ll}
  \pi(\sigma_H(p)), & \mbox{if }\sigma_H(p)\in\varOmega(H);\\
  \sigma_H(p),      & \mbox{otherwise}.
\end{array}\right.$$
Clearly the map $\sigma_G$ is injective and meets all vertices of
$V(G)$. Now we prove that it is continuous.

Since $\sigma_H$ is homeomorphic, $\sigma_G$ is continuous at point
$p$ if $\sigma_H(p)$ is a vertex or is an inner point of an edge.
Now we assume that $\sigma_H(p)=\alpha\in\varOmega(H)$. Let
$\boldsymbol{p}=(p_i)_{i=0}^\infty$ be a sequence of points in $S^1$
converges to $p$ and let $S\subseteq V(G)$ be a finite set. Since
$\sigma_H$ is continuous, the neighborhood $\widehat{C}_H(S,\alpha)$
of $\alpha$ contains almost all terms of
$(\sigma(p_i))_{i=0}^\infty$ (that is, there exists $j$ such that
$\sigma(p_i)\in\widehat{C}_H(S,\alpha)$ for all $i\geq j$). Recall
that $C_H(S,\alpha)\subseteq C_G(S,\pi(\alpha))$,
$E_H(S,\alpha)\subseteq E_G(S,\pi(\alpha))$ and
$\pi(\varOmega_H(S,\alpha))\subseteq\varOmega_G(S,\pi(\alpha))$. It
follows that $\widehat{C}_G(S,\pi(\alpha))$ contains almost all
terms of $(\sigma_G(p_i))_{i=0}^\infty$. Thus $\sigma_G$ is
homeomorphic and then is a Hamiltonian circle of $G$.
\end{proof}

\begin{lemma}\label{FaFaithfulFaithful}
Suppose that $K\leq H\leq G$. If $H$ is faithful to $G$ and $K$ is
faithful to $H$, then $K$ is faithful to $G$.
\end{lemma}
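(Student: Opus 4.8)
The plan is to verify directly that $K$ satisfies the two defining conditions of a faithful subgraph of $G$, using the faithfulness of $H$ in $G$ and of $K$ in $H$ as intermediate steps. Throughout I will exploit the observation made just after the definition that, since each component of a subgraph is contained in a component of the ambient graph, equivalence of rays in a subgraph always implies their equivalence in the ambient graph. Consequently it suffices to establish condition $(2)$ in its simplified form, namely that $R_1\approx_GR_2$ implies $R_1\approx_KR_2$ for any two rays $R_1,R_2$ of $K$.

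For condition $(1)$, I would begin with an arbitrary end $\alpha$ of $G$. By faithfulness of $H$ in $G$, the end $\alpha$ contains a ray $R$ lying in $H$, and this $R$ belongs to some end $\beta$ of $H$. Applying faithfulness of $K$ in $H$ to the end $\beta$ then produces a ray $R'$ of $K$ with $R'\approx_HR$. Since $K\leq H\leq G$, the ray $R'$ is also a ray of $G$, and $R'\approx_HR$ gives $R'\approx_GR$ by the easy subgraph direction; as $R\in\alpha$, we conclude $R'\in\alpha$. Hence every end of $G$ contains a ray of $K$, which is precisely condition $(1)$.

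For condition $(2)$, let $R_1,R_2$ be rays of $K$ with $R_1\approx_GR_2$. Because $R_1,R_2$ are in particular rays of $H$, faithfulness of $H$ in $G$ yields $R_1\approx_HR_2$. Now $R_1,R_2$ are rays of $K$ that are equivalent in $H$, so faithfulness of $K$ in $H$ yields $R_1\approx_KR_2$. This is exactly the required implication, and together with condition $(1)$ it shows that $K$ is faithful to $G$.

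I do not expect any genuine obstacle here: the argument is essentially a diagram chase through the two biconditionals, and the only points requiring care are bookkeeping ones — tracking that each ray lies in the correct vertex set so that the appropriate faithfulness hypothesis may be applied, and invoking only the easy subgraph direction of ray-equivalence (rather than the full biconditional) at the steps where one passes from a smaller graph to a larger one.
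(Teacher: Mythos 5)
Your proof is correct and follows essentially the same route as the paper: for condition $(1)$ you pass from an end of $G$ to a ray of $H$, then to an end of $H$, then to a ray of $K$, exactly as in the paper's first paragraph; for condition $(2)$ you chain the two faithfulness hypotheses, which is the paper's second paragraph (the paper chains both directions of the biconditionals, while you chain only the nontrivial directions after invoking the standard observation that subgraph-equivalence always implies ambient-equivalence — an immaterial difference). No gaps.
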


\begin{proof}
Let $\alpha_G$ be an arbitrary end of $G$. Since $H$ is faithful to
$G$, $H$ has a ray $R_H\in\alpha_G$. Let $\alpha_H$ be the end of
$H$ with $R_H\in\alpha_H$. Since $K$ is faithful to $H$, $K$ has a
ray $R_K\in\alpha_H$. Thus $R_K\approx_HR_H$, implying that
$R_K\approx_GR_H$, that is, $R_K\in\alpha_G$.

Now let $R_1,R_2$ be two rays of $K$. Since $K$ is faithful to $H$,
$R_1\approx_KR_2$ if and only if $R_1\approx_HR_2$. Since $H$ is
faithful to $G$, $R_1\approx_HR_2$ if and only if $R_1\approx_GR_2$.
This implies that $R_1\approx_KR_2$ if and only if
$R_1\approx_GR_2$. It follows that $K$ is faithful to $G$.
\end{proof}

A \emph{comb} of $G$ is the union of a ray $R$ with infinitely many
disjoint finite paths having precisely their first vertex on $R$;
the last vertices of the paths are the \emph{teeth} of the comb; and
the ray $R$ is the \emph{spine} of the comb. We will use the
following Star-Comb Lemma in our proof.

\begin{lemma}[Diestel, see \cite{Di16}]\label{LeComb}
If $U$ is an infinite set of vertices in a connected graph, then the
graph contains either a comb with all teeth in $U$ or a subdivision
of an infinite star with all leaves in $U$.
\end{lemma}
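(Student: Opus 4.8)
The plan is to reduce the statement to the structure of a single tree and then split into two cases according to whether that tree has a vertex of infinite degree. First I would use that $G$ is connected to fix a spanning tree $S$ of $G$, and inside $S$ take the smallest subtree $T$ containing $U$, namely the union of all paths $uSu'$ with $u,u'\in U$. This $T$ is a tree containing the infinite set $U$, hence infinite, and by minimality every leaf of $T$ lies in $U$: a degree-one vertex of $T$ lies on some $U$--$U$ path and must be an endpoint of it, so it belongs to $U$. Since any comb or subdivided star found inside $T$ is automatically a subgraph of $G$, it suffices to produce one of the two configurations inside $T$.

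If $T$ has a vertex $c$ of infinite degree, then I would build the star. Deleting $c$ leaves infinitely many pairwise disjoint components, and each of them contains a vertex of $U$, for otherwise such a component could be deleted without disconnecting $U$, contradicting the minimality of $T$. Choosing in infinitely many of these components a vertex of $U$ and joining it to $c$ by the unique path in $T$ produces infinitely many paths issuing from $c$ that meet only in $c$ and end in distinct vertices of $U$; their union is a subdivision of an infinite star with all leaves in $U$.

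Otherwise every vertex of $T$ has finite degree, and here I would construct the spine of a comb directly rather than invoke a generic ray. Root $T$ at some $r\in U$ and call a vertex $v$ \emph{heavy} if the descendant subtree $T_v$ meets $U$ in an infinite set. The root $r$ is heavy, and since each vertex has only finitely many children, every heavy vertex has a heavy child; following heavy children from $r$ yields a ray $R=v_0v_1v_2\cdots$ which I take as the spine. The hard part is producing infinitely many disjoint teeth in $U$, and I expect this to be the crux of the whole argument. I would consider the nested subtrees $T_{v_0}\supseteq T_{v_1}\supseteq\cdots$ and the pairwise disjoint ``annuli'' $A_i=(T_{v_i}\setminus T_{v_{i+1}})\cap U$, and show by contradiction that infinitely many $A_i$ are nonempty: if $A_i=\emptyset$ for all $i\ge N$, then $T_{v_i}\cap U$ is the same infinite set for every $i\ge N$, whereas no vertex lies in $T_{v_i}$ for all $i$ (a vertex of depth $d$ already leaves $T_{v_{d+1}}$), forcing this common set to be empty---a contradiction. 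Picking $u_i\in A_i$ for infinitely many $i$ and taking the path (possibly trivial) from $v_i$ to $u_i$ inside $T_{v_i}\setminus T_{v_{i+1}}$ gives paths whose only vertex on $R$ is their first vertex $v_i$ and which lie in disjoint annuli, so their union with $R$ is a comb with all teeth in $U$, completing the argument.
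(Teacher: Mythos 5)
Your proof is correct, but note that there is nothing in the paper to compare it against: Lemma \ref{LeComb} is stated without proof and quoted from Diestel's textbook \cite{Di16} as a black box, so the relevant benchmark is the standard textbook argument. Your route---fix a spanning tree $S$, pass to the minimal subtree $T\subseteq S$ containing $U$, then split on whether $T$ has a vertex of infinite degree---is sound at every step: in the infinite-degree case, minimality of $T$ indeed forces every component of $T-c$ to meet $U$ (otherwise that branch could be pruned, leaving a smaller subtree containing $U$), which yields the subdivided star; in the locally finite case, your ``heavy child'' construction of the spine is K\"onig's infinity lemma in disguise, and the annulus argument is the crux and is carried out correctly---in particular, the tooth path from $v_i$ to $u_i\in A_i$ cannot re-enter $T_{v_{i+1}}$, since any vertex of that path is an ancestor of $u_i$, and an ancestor of $u_i$ lying in $T_{v_{i+1}}$ would put $u_i$ itself in $T_{v_{i+1}}$; hence the tooth paths live in pairwise disjoint annuli and meet $R$ precisely in their first vertices. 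Two small remarks. First, some of your teeth paths may be trivial (when $v_i\in U$); the definition of a comb tolerates this (a tooth may lie on the spine), but it is worth flagging explicitly, as you did. Second, invoking a spanning tree of an arbitrary infinite connected graph requires Zorn's lemma; Diestel's own proof sidesteps this by growing an increasing sequence of finite trees, each obtained from the previous one by attaching a path ending in a new vertex of $U$, and then running the same dichotomy (a vertex of infinite degree versus a ray) on the union. For the graphs of this paper, which are locally finite and connected, hence countable, this distinction is immaterial; what your version buys is a concrete minimal tree in which both outcomes (the star center and the comb teeth) are certified directly by minimality.
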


Since a locally finite graph $G$ contains no infinite stars, Lemma
\ref{LeComb} always yields a comb of $G$. If $H$ is a connected
spanning subgraph of $G$, then for every ray $R$ of $G$, the spine
$R'$ of a comb of $H$ with all teeth in $V(R)$ is a ray in $H$ with
$R\approx_GR'$. Therefore the connected spanning subgraph $H$ is
faithful to $G$ if and only if for any two rays $R_1,R_2$ of $H$,
$R_1\approx_GR_2$ implies $R_1\approx_HR_2$.

\begin{lemma}\label{FaFaithfulSpanning}
Let $H$ be a spanning subgraph of $G$ and $K$ be a spanning subgraph
of $H$. If $K$ is connected and faithful to $G$, then $H$ is
faithful to $G$ and $K$ is faithful to $H$.
\end{lemma}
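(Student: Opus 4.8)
The plan is to lean on the remark stated immediately before the lemma: since $K$ is a connected spanning subgraph of both $H$ and $G$, each of $H$ and $G$ is connected, and for a connected spanning subgraph one only has to check the single implication in condition (2), condition (1) being automatic via the Star-Comb Lemma (Lemma \ref{LeComb}). So first I would record the easy reductions. As $K$ is connected and spanning in $H$, the graph $H$ is connected; likewise $G$ is connected. Moreover every end of $G$ already contains a ray of $K$ (faithfulness of $K$ to $G$), which is also a ray of $H$ since $K\leq H$; and every end of $H$ contains a ray of $K$ by the comb argument applied inside $H$. Hence condition (1) holds both for ``$H$ faithful to $G$'' and for ``$K$ faithful to $H$''.

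For ``$K$ is faithful to $H$'' I would argue directly. Let $R_1,R_2$ be rays of $K$ with $R_1\approx_HR_2$. Since $H\leq G$, equivalence in $H$ forces equivalence in $G$, so $R_1\approx_GR_2$; and since $K$ is faithful to $G$, this gives $R_1\approx_KR_2$. The reverse implication is automatic from $K\leq H$, so condition (2) holds and $K$ is faithful to $H$.

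The harder part is ``$H$ is faithful to $G$'', because a ray of $H$ need not be a ray of $K$, so faithfulness of $K$ cannot be invoked on it directly. Here I would pull rays down into $K$. Given rays $R_1,R_2$ of $H$ with $R_1\approx_GR_2$, I apply the comb argument inside the connected spanning subgraph $K$ of $H$, taking $U=V(R_i)$, to obtain for each $i$ a comb of $K$ whose spine $R_i'$ is a ray of $K$ with $R_i\approx_HR_i'$ (this is exactly the remark preceding the lemma, with $K$ in the role of the connected spanning subgraph and $H$ as the ambient graph). Then $R_1'\approx_GR_1\approx_GR_2\approx_GR_2'$, so $R_1'\approx_GR_2'$; since $K$ is faithful to $G$ and $R_1',R_2'$ are rays of $K$, we obtain $R_1'\approx_KR_2'$, whence $R_1'\approx_HR_2'$. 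Chaining with $R_i\approx_HR_i'$ yields $R_1\approx_HR_2$, which establishes condition (2).

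The main obstacle, as just indicated, is precisely condition (2) for ``$H$ faithful to $G$'': the rays of $H$ witnessing an end of $G$ may lie entirely outside $K$, so one cannot transfer their equivalence through $K$ without first replacing them. The device that resolves this is the comb argument, which converts an arbitrary ray of $H$ into an $H$-equivalent ray of $K$; once that replacement is made, everything reduces to a routine transfer of the end-equivalence relation along the inclusions $K\leq H\leq G$.
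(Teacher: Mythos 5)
Your proof is correct and follows essentially the same route as the paper's: both use the Star--Comb Lemma (Lemma \ref{LeComb}) to replace rays of $H$ by $H$-equivalent spines lying in $K$, then transfer end-equivalence through the faithfulness of $K$ to $G$. The only cosmetic differences are that you compare the two spines $R_1',R_2'$ directly instead of routing through an auxiliary ray $R\in\alpha_G$ of $K$, and you prove ``$K$ is faithful to $H$'' independently of ``$H$ is faithful to $G$'' (using only $K\leq H\leq G$), which is if anything slightly cleaner.
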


\begin{proof}
Let $R_1,R_2$ be two rays of $H$ with $R_1\approx_GR_2$. Let
$\alpha_G$ be the end of $G$ containing $R_1,R_2$, let
$R\in\alpha_G$ be a ray of $K$. By Lemma \ref{LeComb}, $K$ has a
comb with all teeth in $V(R_1)$. Let $R'_1$ be the spine of the
comb. Thus $R'_1$ is a ray of $K$ and $R_1\approx_HR'_1$. Since
$H\leq G$, $R_1\approx_GR'_1$ and then $R\approx_GR'_1$. Since $K$
is faithful to $G$, $R\approx_KR'_1$. Since $K\leq H$,
$R\approx_HR'_1$, and then $R\approx_HR_1$. By a similar analysis,
we have $R\approx_HR_2$, and thus $R_1\approx_HR_2$. This implies
that $H$ is faithful to $G$.

Now let $R_1,R_2$ be two rays of $K$ with $R_1\approx_HR_2$. Since
$H$ is faithful to $G$, $R_1\approx_GR_2$. Since $K$ is faithful to
$G$, $R_1\approx_KR_2$. It follows that $K$ is faithful to $H$.
\end{proof}

\begin{lemma}\label{FaFaithfulTree}
Let $\mathcal{P}$ be a partition of $V(G)$ such that $G[P]$ is
connected and finite for every $P\in\mathcal{P}$, let $\mathcal{G}$
be the graph on $\mathcal{P}$ such that for $P_1,P_2\in\mathcal{P}$,
$P_1P_2\in E(\mathcal{G})$ if and only if
$E_G(P_1,P_2)\neq\emptyset$, and let $\mathcal{T}$ be a spanning
tree of $\mathcal{G}$. For every partition set $P\in\mathcal{P}$,
let $T_P$ be a spanning tree of $G[P]$; and for every edge
$f=P_1P_2\in E(\mathcal{G})$, let $e_f$ be an edge in
$E_G(P_1,P_2)$. Let $T$ be the spanning tree of $G$ with edge set
$$\{e_f: f\in E(\mathcal{T})\}\cup\bigcup_{P\in\mathcal{P}}E(T_P).$$
If $\mathcal{T}$ is faithful to $\mathcal{G}$, then $T$ is faithful
to $G$.
\end{lemma}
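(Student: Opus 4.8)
The plan is to exploit the natural projection from $G$ onto $\mathcal{G}$ induced by the partition $\mathcal{P}$, and to transport the equivalence of rays back and forth across it. Write $\phi\colon V(G)\to\mathcal{P}$ for the map sending each vertex to the partition set containing it; this is also the contraction map identifying each subtree $T_P$ of $T$ to a point, and under this contraction $T$ becomes exactly $\mathcal{T}$. Since $T$ is a connected spanning subgraph of $G$, by the remark following Lemma \ref{LeComb} it suffices to prove that $R_1\approx_GR_2$ implies $R_1\approx_TR_2$ for any two rays $R_1,R_2$ of $T$. Note also that $\mathcal{G}$ is locally finite: each $P\in\mathcal{P}$ is finite and $G$ is locally finite, so only finitely many parts are adjacent to $P$.

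First I would attach to every ray $R$ of $T$ a ray $\rho(R)$ of $\mathcal{T}$. The key observation is that, because $T$ is a tree and each $T_P$ is a connected subtree, $R$ meets each part $P$ in a single contiguous segment: if $u,w\in P$ both lie on $R$, then the unique path in $T$ joining $u$ and $w$ lies in $T_P$, hence so does the segment of $R$ between them. As each part is finite and $R$ is infinite, $R$ meets infinitely many parts; listing them in the order $R$ visits them gives a sequence $P^{(0)},P^{(1)},\dots$ of distinct parts in which consecutive ones are joined in $T$ by an edge $e_f$ with $f\in E(\mathcal{T})$, hence are adjacent in $\mathcal{T}$. Thus $\rho(R):=P^{(0)}P^{(1)}\cdots$ is a genuine ray of $\mathcal{T}$.

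Next I would prove two transfer statements. (a) $R_1\approx_GR_2$ implies $\rho(R_1)\approx_{\mathcal{G}}\rho(R_2)$: arguing contrapositively, a finite set $\mathcal{S}\subseteq\mathcal{P}$ separating the two projections in $\mathcal{G}$ pulls back to the finite set $S=\bigcup_{P\in\mathcal{S}}P$ (finite because each part is), and any $G$-path between vertices whose images lie in different components of $\mathcal{G}-\mathcal{S}$ would project to a walk in $\mathcal{G}-\mathcal{S}$ joining them, which is impossible; hence $S$ separates the tails of $R_1,R_2$ in $G$. (b) $\rho(R_1)\approx_{\mathcal{T}}\rho(R_2)$ implies $R_1\approx_TR_2$: in the tree $\mathcal{T}$ equivalent rays share a common tail, say the ray $Q^{(0)}Q^{(1)}\cdots$; for any finite $S\subseteq V(T)$ the finite set $\phi(S)$ is avoided by $Q^{(j)}$ for all large $j$, and the union of the subtrees $T_{Q^{(j)}}$ together with their connecting edges $e_f$ is then a connected subgraph of $T-S$ containing tails of both $R_1$ and $R_2$, so they lie in one component of $T-S$.

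Finally, chaining (a), the faithfulness of $\mathcal{T}$ to $\mathcal{G}$ (which gives $\rho(R_1)\approx_{\mathcal{G}}\rho(R_2)\Rightarrow\rho(R_1)\approx_{\mathcal{T}}\rho(R_2)$), and (b) yields $R_1\approx_GR_2\Rightarrow R_1\approx_TR_2$, completing the proof. The main obstacle I anticipate is the ray construction together with Step (b): one must use the tree structure both to see that each part is visited in a single block, so that $\rho(R)$ is a ray rather than a mere walk, and, via the common-tail characterisation of ends in the tree $\mathcal{T}$, to exhibit an explicit connected subgraph of $T-S$ carrying both tails. Throughout, the finiteness of the parts is exactly what makes the separator correspondence $S\leftrightarrow\mathcal{S}$ work in both directions.
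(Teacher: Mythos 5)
Your proposal is correct and takes essentially the same route as the paper's proof: project rays of $T$ to rays of $\mathcal{T}$ via the partition, transfer $\approx_G$ to $\approx_{\mathcal{G}}$ through the separator correspondence $S\leftrightarrow\mathcal{S}$, apply the faithfulness of $\mathcal{T}$, and lift the equivalence back to $T$. The only cosmetic differences are that you argue the first transfer contrapositively and replace the paper's explicit lifting map $\varrho$ (with the observation $R\approx_T\varrho(\rho(R))$) by the common-tail characterisation of ends of the tree $\mathcal{T}$; both devices do the same work.
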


\begin{proof}
For every ray $R=v_0v_1v_2\ldots$ of $G$, we define a ray $\rho(R)$
of $\mathcal{G}$ as follows: Let $P_0\in\mathcal{P}$ with $v_0\in
P_0$ and $\phi(0)$ be the maximum integer with $v_{\phi(0)}\in R_0$
($\phi(0)$ exists since $P_0$ is finite). Suppose we have already
defined $P_{i-1}$ and $\phi(i-1)$, $i\geq 1$. Let
$P_i\in\mathcal{P}$ such that $v_{\phi(i-1)+1}\in P_i$ and $\phi(i)$
be the maximum integer with $v_{\phi(i)}\in P_i$. Clearly
$v_{\phi(i-1)}\in P_{i-1}$, $v_{\phi(i-1)+1}\in P_i$, implying that
$E_G(P_{i-1},P_i)\neq\emptyset$, and $P_{i-1}P_i\in E(\mathcal{G})$,
$i\geq 1$. Now it follows that $\rho(R)=P_0P_1P_2\ldots$ is a ray of
$\mathcal{G}$. Note that if $R$ is a ray of $T$, then $\rho(R)$ is a
ray of $\mathcal{T}$.

We claim that if $R_1\approx_GR_2$, then
$\rho(R_1)\approx_\mathcal{G}\rho(R_2)$. Let
$\mathcal{S}\subseteq\mathcal{P}=V(\mathcal{G})$ be an arbitrary
finite set. Set $S=\bigcup_{P\in\mathcal{S}}P$. Since each
$P\in\mathcal{P}$ is finite, $S$ is finite. If $R_1\approx_GR_2$,
then there is a component $C$ of $G-S$ that contains a tail $R'_i$
of $R_i$, $i=1,2$. Recall that each $P\in\mathcal{P}$ induces a
connected finite subgraph of $G$. Thus the subgraph $\mathcal{C}$ of
$\mathcal{G}$ induced by $\{P\in\mathcal{P}: P\subseteq V(C)\}$ is a
component of $\mathcal{G}-\mathcal{S}$. Since $\mathcal{C}$ contains
all the vertices $P$ with $P\cap V(R'_i)\neq\emptyset$,
$\mathcal{C}$ contains a tail of $\rho(R_i)$ for $i=1,2$. It follows
that $\rho(R_1)\approx_\mathcal{G}\rho(R_2)$.

For every ray $\mathcal{R}=P_0P_1P_2\ldots$ of $\mathcal{T}$, we
define a ray $\varrho(\mathcal{R})$ of $T$ as follows: Let $u_0\in
P_0$ be a fixed vertex. For $i\geq 0$, let
$e_{P_iP_{i+1}}=v_iu_{i+1}$ be the unique edge in
$E_T(P_i,P_{i+1})$, let $R_i$ be the unique $(u_i,v_i)$-path of
$T_{P_i}$. It follows that
$\varrho(\mathcal{R})=u_0R_0v_0u_1R_1v_1u_2R_2v_2\ldots$ is a ray of
$T$. Note that for every ray $\mathcal{R}$ of $\mathcal{T}$,
$\rho(\varrho(\mathcal{R}))=\mathcal{R}$; and for every ray $R$ of
$T$, $R$ and $\varrho(\rho(R))$ differ only by a finite initial
segments (and so $R\approx_T\varrho(\rho(R))$).

We claim that if $\mathcal{R}_1\approx_\mathcal{T}\mathcal{R}_2$,
then $\varrho(\mathcal{R}_1)\approx_T\varrho(\mathcal{R}_2)$. Let
$S\subseteq V(G)$ be an arbitrary finite set. Set
$\mathcal{S}=\{P\in\mathcal{P}: P\cap S\neq\emptyset\}$. So
$\mathcal{S}$ is a finite subset of $V(\mathcal{G})$. If
$\mathcal{R}_1\approx_\mathcal{T}\mathcal{R}_2$, then there is a
component $\mathcal{C}$ of $\mathcal{T}-\mathcal{S}$ that contains a
tail $\mathcal{R}'_i$ of $\mathcal{R}_i$, $i=1,2$. Let $C$ be the
subgraph of $T$ induced by $\bigcup_{P\in V(\mathcal{C})}P$. It
follows that $C$ is contained in a component of $T-S$. Since $C$
contains all vertices in $\bigcup_{P\in V(\mathcal{R}'_i)}P$, $C$
contains a tail of $\varrho(\mathcal{R}_i)$ for $i=1,2$. It follows
that $\varrho(\mathcal{R}_1)\approx_T\varrho(\mathcal{R}_2)$.

Now we prove the lemma. Suppose that $\mathcal{T}$ is faithful to
$\mathcal{G}$, and $R_1,R_2$ are two rays of $T$ such that
$R_1\approx_GR_2$. It follows that $\rho(R_1),\rho(R_2)$ are two
rays of $\mathcal{T}$ with $\rho(R_1)\approx_\mathcal{G}\rho(R_2)$.
Since $\mathcal{T}$ is faithful to $\mathcal{G}$,
$\rho(R_1)\approx_\mathcal{T}\rho(R_2)$, and thus
$\varrho(\rho(R_1))\approx_T\varrho(\rho(R_2))$. Recall that
$R_1\approx_T\varrho(\rho(R_1))$ and
$R_2\approx_T\varrho(\rho(R_2))$. We have $R_1\approx_TR_2$,
implying that $T$ is faithful to $G$.
\end{proof}

\begin{lemma}\label{FaFaithfulPower}
For any connected graph $G$ and integer $t\geq 1$, $G$ is faithful
to $G^t$ and $G_B^t$.
\end{lemma}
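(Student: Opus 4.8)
The plan is to handle the power $G^t$ and the bi-power $G_B^t$ simultaneously, writing $\hat G$ for whichever of the two is under consideration. First I would record the trivial containment: every edge $xy\in E(G)$ has $d_G(x,y)=1$, which is odd and at most $t$, so $G\le G^t$ and $G\le G_B^t$. Since $V(\hat G)=V(G)$, the graph $G$ is a \emph{connected spanning} subgraph of $\hat G$, and because $G$ is locally finite (hence so is $\hat G$, each neighborhood in $\hat G$ being contained in a ball of radius $t$ in $G$) the end-space machinery applies. Now I would invoke the reduction noted after Lemma~\ref{LeComb}, applied with ambient graph $\hat G$ and connected spanning subgraph $G$: it says that $G$ is faithful to $\hat G$ precisely when, for any two rays $R_1,R_2$ of $G$, $R_1\approx_{\hat G}R_2$ implies $R_1\approx_G R_2$. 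Thus condition~(1) and one direction of~(2) are automatic, and everything reduces to this single implication.

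I would prove its contrapositive. Suppose $R_1\not\approx_G R_2$, so there is a finite set $S\subseteq V(G)$ such that the tails of $R_1$ and $R_2$ eventually lie in distinct components $C_1\ne C_2$ of $G-S$. The key device is to thicken $S$ into the ball $S'=\{v\in V(G):d_G(v,S)\le t\}$. Because $G$ is locally finite, every ball of radius $t$ about a vertex is finite, so $S'$ is a finite set; and I may pass to tails $R_1',R_2'$ of $R_1,R_2$ that avoid $S'$, which still lie in $C_1,C_2$ respectively.

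The heart of the matter is a lifting step. Assume for contradiction that some component of $\hat G-S'$ contains a vertex $a\in R_1'$ and a vertex $b\in R_2'$, witnessed by a path $Q=w_0w_1\cdots w_m$ in $\hat G-S'$ with $w_0=a$ and $w_m=b$. Each edge $w_iw_{i+1}$ of $\hat G$ joins two vertices at $G$-distance at most $t$, so I replace it by a shortest $G$-path $P_i$, whose length is at most $t$; concatenating the $P_i$ gives a $G$-walk from $a$ to $b$. As $a\in C_1$ and $b\in C_2$ lie in different components of $G-S$, this walk must contain some $s\in S$, and $s$ lies on one segment $P_i$. Then $d_G(w_i,s)$ is at most the length of $P_i$, hence at most $t$, so $w_i\in S'$; but $w_i$ is a vertex of $Q$, which avoids $S'$, a contradiction. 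Therefore no component of $\hat G-S'$ contains tails of both rays, i.e. $R_1\not\approx_{\hat G}R_2$, and the implication is proved. The identical set $S'$ and the same lifting work verbatim for $G_B^t$, the only difference being that fewer pairs are edges (those at \emph{odd} distance), which leaves the length bound untouched. I expect the one point needing care to be ensuring that the witness $s\in S$ forces a vertex of $Q$ itself — not merely an interior vertex of a lifted segment — into $S'$; taking the ball radius to be exactly $t$ and demanding that the chosen tails avoid all of $S'$ makes this bookkeeping automatic.
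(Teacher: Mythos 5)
Your proof is correct and takes essentially the same approach as the paper: both arguments thicken the finite separator $S$ to the radius-$t$ ball $S'=S\cup N_{G^t}(S)$ and lift edges of the power graph to $G$-paths of length at most $t$ to transfer the separation, you merely phrasing the key step contrapositively. The only cosmetic difference is that you handle $G_B^t$ directly by the same argument, whereas the paper proves the statement for $G^t$ and then deduces the bi-power case from Lemma~\ref{FaFaithfulSpanning}, using $G\le G_B^t\le G^t$.
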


\begin{proof}
Suppose that $R_1,R_2$ are two rays of $G$ with
$R_1\approx_{G^t}R_2$. Let $S\subseteq V(G)$ be an arbitrary finite
set, and set $S'=S\cup N_{G^t}(S)$. Clearly $S'$ is finite, and thus
there is a component $C'$ of $G^t-S'$ that contains tails of both
$R_1$ and $R_2$. For any two adjacent vertices $u,v\in
V(G)\backslash S'$, $G$ has a $(u,v)$-path $P$ of length at most
$t$. Since both $u,v$ have distance more than $t$ from $S$,
$V(P)\cap S=\emptyset$. It follows that $u,v$ are contained in a
common component of $G-S$. This implies that all vertices in $V(C')$
are contained in a common component $C$ of $G-S$. Thus $C$ contains
tails of both $R_1$ and $R_2$, implying that $G$ is faithful to
$G^t$.

Recall that $G_B^t$ is a spanning subgraph of $G^t$. By Lemma
\ref{FaFaithfulSpanning}, $G$ is faithful to $G_B^t$ as well.
\end{proof}

A rooted tree $T$ of $G$ is \emph{normal} if the end-vertices of
every $T$-path in $G$ are comparable in the tree-order of $T$. Note
that if $T$ is spanning, then every $T$-path is an edge of $G$. The
\emph{normal rays} of $T$ are those starting at the root of $T$.
From the following lemma, one can see that a normal spanning tree of
$G$ is faithful to $G$.

\begin{lemma}[Diestel, see \cite{Di16}]\label{LeNormalFaithful}
If $T$ is a normal spanning tree of $G$, then every end of $G$
contains exactly one normal ray of $T$.
\end{lemma}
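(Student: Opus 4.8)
The plan is to prove the two assertions---existence and uniqueness---separately, and the whole argument rests on a single structural fact about normal trees, which I would isolate first. Write $\lceil t\rceil$ for the set of vertices on the path of $T$ from the root to $t$ (the ancestors of $t$, including $t$ itself), and let $T_t$ be the subtree consisting of $t$ together with all its descendants; note that $\lceil t\rceil$ is finite, being a finite path. The key claim is that, since every edge of $G$ joins two comparable vertices (normality, with $T$ spanning), $\lceil t\rceil$ separates $V(T_t)\setminus\{t\}$ from $V(G)\setminus V(T_t)$ in $G$, and moreover the components of $G-\lceil t\rceil$ contained in $V(T_t)\setminus\{t\}$ are exactly the subtrees $T_c$ rooted at the children $c$ of $t$. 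More generally I would record that for a finite down-closed set $S$ the components of $G-S$ are precisely the subtrees $T_t$ with $t$ minimal in $V(G)\setminus S$, and that every nonempty down-closed $S$ contains the root. These facts follow directly from comparability of adjacent vertices and from down-closedness, and they are the real engine of the proof.

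For existence, given an end $\alpha$, I would build a normal ray inside it greedily, descending in the tree in the direction of $\alpha$. Set $t_0$ to be the root. Inductively, supposing a tail of every ray of $\alpha$ lies in $T_{t_n}$, the component $C_G(\lceil t_n\rceil,\alpha)$ is contained in $V(T_{t_n})\setminus\{t_n\}$, which by the separation fact is the disjoint union of the components $T_c$ over the children $c$ of $t_n$; hence $C_G(\lceil t_n\rceil,\alpha)$ equals exactly one such $T_c$, and I let $t_{n+1}:=c$. This produces an ascending chain $t_0<t_1<t_2<\cdots$ of consecutive tree-vertices, i.e.\ a normal ray $R_\alpha=t_0t_1t_2\cdots$, and by construction a tail of $R_\alpha$ lies in $T_{t_{n+1}}$ for every $n$.

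It remains to check that $R_\alpha$ belongs to $\alpha$, and this verification is the step I expect to be the most delicate. Given an arbitrary finite $S\subseteq V(G)$, I would pass to the finite down-closed set $S'$ obtained by adjoining all ancestors of vertices of $S$; since $S'$ is down-closed it contains the root, so the first vertex $t_m$ of $R_\alpha$ outside $S'$ satisfies $m\geq1$ and has its parent $t_{m-1}\in S'$, making $t_m$ minimal in $V(G)\setminus S'$. Then $T_{t_m}$ is a component of $G-S'$ containing the tail $t_mt_{m+1}\cdots$ of $R_\alpha$; and because $\lceil t_{m-1}\rceil\subseteq S'$ forces $C_G(S',\alpha)\subseteq C_G(\lceil t_{m-1}\rceil,\alpha)=T_{t_m}$, minimality gives $C_G(S',\alpha)=T_{t_m}$. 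Hence a tail of $R_\alpha$ lies in $C_G(S',\alpha)\subseteq C_G(S,\alpha)$, so $R_\alpha$ is equivalent to the rays of $\alpha$; thus $R_\alpha\in\alpha$.

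Finally, for uniqueness, suppose two normal rays $R=t_0t_1\cdots$ and $R'=t'_0t'_1\cdots$ of $T$ lie in the same end. As both start at the root, let $t=t_k=t'_k$ be their last common vertex; then $t_{k+1}$ and $t'_{k+1}$ are two distinct children of $t$. By the separation fact their respective subtrees, which contain the tails of $R$ and $R'$, lie in different components of $G-\lceil t\rceil$, and $\lceil t\rceil$ is finite, so $R$ and $R'$ cannot be equivalent---a contradiction. Hence the normal ray in each end is unique. The main work throughout is the structural separation lemma for normal trees; once that is in place, both halves are short.
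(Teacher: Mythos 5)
This lemma is stated in the paper without proof---it is quoted from Diestel's book \cite{Di16}---so there is no internal argument of the paper to compare yours against; your proposal must stand on its own, and it does. Your proof is correct and is essentially the standard textbook argument: the whole weight is carried by the structural fact that in a normal \emph{spanning} tree every edge of $G$ joins comparable vertices, so that a finite down-closed set $S$ separates $G$ into components that are precisely the subtrees $T_t$ with $t$ minimal outside $S$; existence then follows by your greedy descent through children, and uniqueness because two normal rays diverging at a vertex $t$ have their tails in distinct components of $G-\lceil t\rceil$, a finite separator. The verification that $R_\alpha\in\alpha$ via the down-closure $S'$ of an arbitrary finite $S$, using that $C_G(S',\alpha)$ and $T_{t_m}$ are both components of $G-S'$ and one contains the other, is exactly the delicate step and you handle it correctly. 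The only point left implicit, worth one sentence, is that in the inductive descent the vertex $t_n$ actually has a child: this holds because the inductive hypothesis places an infinite tail of a ray of $\alpha$ inside $T_{t_n}$, so $T_{t_n}$ cannot consist of $t_n$ alone. This is not a gap, merely an omission of a triviality.
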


One can see that the normal spanning tree has a nice property for
the infinite graphs. From the following theorem, we can always find
a normal spanning tree in connected locally finite graphs.

\begin{theorem}[Jung \cite{Ju}]\label{ThNormalTree}
Every countable connected graph has a normal spanning tree.
\end{theorem}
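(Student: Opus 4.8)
The plan is to construct the desired tree as the union of an increasing chain of \emph{finite} normal trees, built greedily so as to exhaust the vertex set. Fix an enumeration $v_0,v_1,v_2,\dots$ of $V(G)$ (possible since $G$ is countable) and take $v_0$ as the root. I would define trees $T_0\subseteq T_1\subseteq\cdots$ with $T_0=(\{v_0\},\emptyset)$, each $T_n$ a finite normal tree of $G$ rooted at $v_0$, and each $T_{n+1}$ obtained from $T_n$ by attaching one finite path. The tree I want is $T:=\bigcup_n T_n$, and the two things to verify are that $T$ is spanning and that it is normal.

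For the inductive step, suppose $T_n$ is a finite normal tree and $v_m$ is the not-yet-covered vertex of least index. Let $C$ be the component of $G-V(T_n)$ containing $v_m$. Here I need one structural fact: the set $N(C)\cap V(T_n)$ of tree-vertices adjacent to $C$ is a \emph{chain} in the tree-order of $T_n$. This follows directly from normality, since any two incomparable such vertices $a,b$ could be joined by a path through $C$ whose interior avoids $T_n$ (a neighbor of $a$ in $C$, a path inside the connected set $C$, a neighbor of $b$), i.e.\ a $T_n$-path, contradicting comparability. Because $T_n$ is finite and $G$ is connected (so this set is nonempty), the chain has a top element $t$. I then pick a path $P$ from $t$ to $v_m$ whose interior lies in $C$, and set $T_{n+1}:=T_n\cup P$, ordering the new vertices as a chain of descendants of $t$.

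The crux is checking that $T_{n+1}$ is again normal. I would argue by examining an arbitrary $(T_{n+1})$-path $Q$ with endpoints $a,b$. If both endpoints lie in $T_n$, then $Q$ is already a $T_n$-path and normality of $T_n$ applies; if both lie on the new chain $P$, they are comparable by construction. The remaining case, one endpoint $a$ on $P\setminus T_n$ and the other $b$ in $T_n$, is the heart of the matter: since the interior of $Q$ avoids $V(T_n)$ and starts adjacent to $a\in C$, it stays inside $C$, forcing $b\in N(C)$ and hence $b\le t<a$. This case analysis — showing that no $(T_{n+1})$-path can wander out of $C$ and return to the old tree above $t$ — is the step I expect to be the main obstacle, and it is exactly what the choice of $t$ as the top of the chain is designed to control.

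Finally I would pass to the limit. The orders of the $T_n$ cohere, since each extension only introduces new vertices as descendants and leaves the order among old vertices unchanged; thus $T$ carries a well-defined tree-order restricting to that of each $T_n$, and as an increasing union of finite trees sharing a root it is itself a tree. It is spanning by the least-index bookkeeping: if $v_k$ were the least vertex never added, then after all of $v_0,\dots,v_{k-1}$ have appeared the least uncovered index is $k$, so $v_k$ would be added, a contradiction. And $T$ is normal because for any edge $ab$ of $G$ one may choose $n$ with $a,b\in V(T_n)$; there $ab$ is a $T_n$-path, so $a,b$ are comparable in $T_n$ and hence in $T$. Thus $T$ is a normal spanning tree, as required.
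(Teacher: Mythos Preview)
The paper does not give its own proof of this theorem: it is stated with attribution to Jung~\cite{Ju} and simply invoked later. So there is nothing in the paper to compare against, and your write-up stands on its own.

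Your argument is correct and is essentially the standard textbook proof (cf.\ Diestel~\cite{Di16}). The key structural observation --- that for a finite normal tree $T_n$ the neighbourhood $N(C)\cap V(T_n)$ of any component $C$ of $G-V(T_n)$ is a chain, so one may attach a path at its top element $t$ --- is exactly the point that makes the induction go through, and your case analysis for the normality of $T_{n+1}$ is sound: in the mixed case the interior of the $T_{n+1}$-path, avoiding $V(T_{n+1})\supseteq V(T_n)$, is forced to stay inside $C$ once it leaves $a\in C$, so its other endpoint lies in $N(C)$ and hence below $t$. The limit step is also fine: the least-index bookkeeping guarantees the union is spanning, and normality of the spanning tree reduces to checking single edges, each of which is a $T_n$-path for large enough $n$. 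One small point you might state explicitly is that the attached path $P$ is finite (which follows since $C\cup\{t\}$ is connected and any two vertices of a connected graph are joined by a finite path), so that each $T_n$ remains finite and the ``top element'' $t$ genuinely exists at every stage.
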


\subsection{Degree of ends}

The \emph{(vertex-)degree} of an end $\alpha\in\varOmega(G)$ is the
maximum number of vertex-disjoint rays in $\alpha$; and the
edge-degree of $\alpha$ is the maximum number of edge-disjoint rays
in $\alpha$. We refer the reader to \cite{BrSt} for some properties
on the end degrees of graphs. Before giving our lemma concerning the
degree of ends, we first list the following K\"onig's Infinite
Lemma.

\begin{lemma}[K\"onig, see \cite{Di16}]\label{LeKonig}
Let $V_0,V_1,V_2,\ldots$ be an infinite sequence of disjoint
non-empty finite sets, and let $G$ be a graph on
$\bigcup_{i=0}^\infty V_i$. Assume that every vertex in $V_i$ has a
neighbor in $V_{i-1}$, $i\geq 1$. Then $G$ has a ray
$R=v_0v_1v_2\ldots$ with $v_i\in V_i$ for all $i\geq 0$.
\end{lemma}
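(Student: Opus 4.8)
The plan is to prove this by the classical iterated-pigeonhole (``diagonal'') argument underlying König's lemma, exploiting that each level $V_i$ is finite while the backward-neighbor hypothesis forces paths of every length to exist. First I would use the hypothesis to manufacture, for each index $j\geq 0$, a finite path reaching level $j$: pick any $w_j\in V_j$ (possible since $V_j\neq\emptyset$) and repeatedly apply the condition that every vertex of $V_k$ has a neighbor in $V_{k-1}$ to trace backward, obtaining a path $P_j=u_0^{(j)}u_1^{(j)}\cdots u_j^{(j)}$ with $u_k^{(j)}\in V_k$, $u_j^{(j)}=w_j$, and consecutive vertices adjacent. This yields infinitely many such paths, one landing on each level.

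Next I would extract a single ray by choosing vertices level by level while keeping an infinite ``supply'' of paths that agree with the choices made so far. Since $V_0$ is finite and there are infinitely many paths $P_j$, some $v_0\in V_0$ equals $u_0^{(j)}$ for all $j$ in an infinite set $J_0$. Inductively, given $v_0,\dots,v_i$ and an infinite set $J_i$ of indices $j\geq i$ with $u_k^{(j)}=v_k$ for all $k\leq i$, the finiteness of $V_{i+1}$ forces some $v_{i+1}\in V_{i+1}$ with $u_{i+1}^{(j)}=v_{i+1}$ for all $j$ in an infinite subset $J_{i+1}\subseteq J_i$ (of indices $j\geq i+1$). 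Because every $j\in J_{i+1}$ has $u_i^{(j)}u_{i+1}^{(j)}=v_iv_{i+1}$ appearing as an edge of $P_j$, the pair $v_iv_{i+1}$ is an edge of $G$. The $v_i$ are automatically distinct, the $V_i$ being pairwise disjoint, so $R=v_0v_1v_2\cdots$ is a genuine ray with $v_i\in V_i$ for every $i$, as required.

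Conceptually this is just König's lemma applied to the rooted tree whose nodes are the backward-traceable partial paths ordered by extension: the backward-neighbor condition makes the tree infinite (it has a node on every level), and the finiteness of each $V_i$ makes it finitely branching, so it has an infinite branch, which is the ray. The step I would be most careful about—and the only delicate point—is the consistency of the successive choices: the nested infinite index sets $J_0\supseteq J_1\supseteq\cdots$ are exactly what guarantee that the chosen $v_i$ fit together into one path rather than into an unrelated sequence of vertices, and it is precisely the finiteness of each single level $V_i$ (not of the whole union $\bigcup_i V_i$) that makes each pigeonhole step succeed.
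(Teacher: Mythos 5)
Your proof is correct. Note, however, that the paper does not prove this statement at all: it is quoted as a known classical result (K\"onig's Infinity Lemma) with a citation to Diestel's book \cite{Di16}, so there is no in-paper argument to compare against. Your argument---tracing a backward path $P_j$ from each level $j$ using the neighbor-in-$V_{i-1}$ hypothesis, then extracting the ray by iterated pigeonhole on the finite levels with nested infinite index sets $J_0\supseteq J_1\supseteq\cdots$---is exactly the standard proof of this lemma (essentially the one in the cited reference), and all the delicate points are handled: the adjacency $v_iv_{i+1}\in E(G)$ is witnessed by any $j\in J_{i+1}$, and distinctness of the $v_i$ follows from disjointness of the levels. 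The only cosmetic caveat is that your closing paragraph, which recasts the argument as ``K\"onig's lemma applied to the tree of partial paths,'' is circular if read as the proof itself, since the statement being proved is K\"onig's lemma; but since your first two paragraphs give a self-contained pigeonhole argument, that remark is harmless commentary rather than a gap.
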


\begin{lemma}\label{FaDegreeArc}
Let $A,B\subseteq V(G)$ be disjoint, and $\alpha\in\varOmega(G)$.
\begin{mathitem}
\item $G$ has $k$ vertex-disjoint $(A,B)$-paths if and only if $|G|$ has
$k$ vertex-disjoint $(A,B)$-curves.
\item $\alpha$ has degree at least $k$ if and only if $|G|$ has $k$
vertex-disjoint nontrivial curves ending in $\alpha$.
\end{mathitem}
\end{lemma}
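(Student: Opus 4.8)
The plan is to treat the two parts separately, deriving part (2) from part (1) together with K\"onig's Infinite Lemma (Lemma \ref{LeKonig}); in each part one implication is immediate and the other rests on a single combinatorial principle made to cooperate with the topology of $|G|$. For part (1) the direction `paths $\Rightarrow$ curves' is trivial, since every $(A,B)$-path is in particular an $(A,B)$-curve and vertex-disjointness is preserved. For the converse I would argue by contraposition using Menger's theorem. Suppose $G$ has no $k$ vertex-disjoint $(A,B)$-paths; then the maximum number of such paths is some finite $m<k$, so by Menger's theorem there is an $(A,B)$-separator $S$ with $|S|=m<k$. The key point is that a finite vertex set separates $A$ from $B$ in $|G|$ exactly as it does in $G$: for finite $S$ the components of $|G|-S$ are precisely the sets $\widehat{C}(S,\beta)$, one for each component of $G-S$ (standard, see \cite{Di16}), so vertices lying in different components of $G-S$ lie in different components of $|G|-S$. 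Consequently any $(A,B)$-curve, being a connected subset of $|G|$ joining the two sides, must meet $S$, and meet it in a vertex. Thus $k$ vertex-disjoint $(A,B)$-curves would occupy $k$ distinct vertices of $S$, contradicting $|S|<k$; hence $|G|$ has no $k$ vertex-disjoint $(A,B)$-curves, as required.

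For part (2), the direction `degree $\ge k\ \Rightarrow\ k$ curves' is easy. If $R_1,\dots,R_k$ are vertex-disjoint rays in $\alpha$, then in $|G|$ each closure $\overline{R_i}=R_i\cup\{\alpha\}$ is an arc from the first vertex of $R_i$ to the point $\alpha$, hence a nontrivial curve ending in $\alpha$; these $k$ curves share only the non-vertex point $\alpha$, so they are vertex-disjoint.

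The substantial direction is the converse, and here I would reduce to part (1). Let $\sigma_1,\dots,\sigma_k$ be the given vertex-disjoint nontrivial curves ending in $\alpha$, fix a finite set $A$ containing their $k$ starting vertices, and fix an exhaustion $S_1\subset S_2\subset\cdots$ of $V(G)$ by finite sets with $C(S_{n+1},\alpha)\subseteq C(S_n,\alpha)$; by local finiteness the inner boundaries are finite and, for all large $n$, $A\cap C(S_n,\alpha)=\emptyset$. Since each $\sigma_i$ ends at $\alpha\in\widehat{C}(S_n,\alpha)$, it must eventually reach a vertex of $C(S_n,\alpha)$ (the only way to approach $\alpha$); truncating $\sigma_i$ at its first such vertex yields $k$ vertex-disjoint $(A,C(S_n,\alpha))$-curves, so by part (1) the graph $G$ has $k$ vertex-disjoint $(A,C(S_n,\alpha))$-paths, for every $n$.

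It remains to pass from these families, level by level, to $k$ disjoint rays in $\alpha$, and this is the main obstacle: the $k$-path families for different $n$ are unrelated and may route through other (infinite) parts of $G$, so one cannot simply take a union. I would organize the limit through K\"onig's Infinite Lemma. Let $V_n$ be the set of admissible configurations recording how such a $k$-path family crosses the finite cut around $C(S_n,\alpha)$, with a configuration at level $n+1$ joined to its restriction at level $n$; each $V_n$ is finite (local finiteness) and nonempty (previous paragraph), so Lemma \ref{LeKonig} yields a consistent infinite tower of configurations. Choosing, within each annular region $C(S_n,\alpha)\setminus C(S_{n+1},\alpha)$, a fixed system of $k$ disjoint segments realizing consecutive configurations (these exist by the same curve-truncation plus part (1) applied inside the annulus) and concatenating them through the shared cut vertices produces $k$ pairwise-disjoint rays with a tail in every $C(S_n,\alpha)$, hence lying in $\alpha$, so $\alpha$ has degree at least $k$. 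The delicate points I would treat carefully are the finiteness of each $V_n$ and the consistent matching of segments across successive cuts, so that concatenation indeed yields $k$ disjoint \emph{rays} rather than merely $k$ disjoint paths at each finite stage.
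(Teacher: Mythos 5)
Part (1) of your proposal is correct and is essentially the paper's argument: the nontrivial direction is Menger's theorem plus the Diestel--K\"uhn fact that a finite vertex set $S$ separates $|G|$ exactly as it separates $G$; whether one phrases the contradiction as ``every $(A,B)$-curve must meet $S$, so $k$ disjoint curves need $k$ vertices of $S$'' (your version) or ``some curve misses $S$ entirely, so a component of $G-S$ meets both $A$ and $B$'' (the paper's version) is immaterial. Likewise, the easy direction of (2) and your reduction of the hard direction to (1) --- truncating the given curves at their first vertices in $C(S_n,\alpha)$ to obtain, for every $n$, $k$ vertex-disjoint $(A,C(S_n,\alpha))$-paths --- coincide with the paper.

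The gap is in your K\"onig step, and it sits exactly where the real work of the lemma lies. Your K\"onig vertices are crossing \emph{configurations} at the cuts, with $c_{n+1}$ adjacent to $c_n$ when some path family realizes $c_{n+1}$ and truncates to a family realizing $c_n$. K\"onig's lemma then produces a sequence $(c_n)$ in which each consecutive pair is compatible, but each compatibility is witnessed by a \emph{different} global family; no single family realizes the whole tower, so the rays still have to be assembled by hand. Your assembly plan --- choose in each annulus $C(S_n,\alpha)\setminus C(S_{n+1},\alpha)$ a system of $k$ disjoint segments realizing $(c_n,c_{n+1})$ and concatenate --- does not go through: an $(A,C(S_{n+1},\alpha))$-path may cross the cut around $C(S_n,\alpha)$ several times and may wander far outside the annulus, indeed outside $C(S_n,\alpha)$ altogether, back through $S_n$ and through components of $G-S_n$ other than $C(S_n,\alpha)$. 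Hence the witnessing families provide no annulus-confined segments, and ``part (1) applied inside the annulus'' has no input curves or paths to feed it; in general the annulus subgraph need not contain \emph{any} system of $k$ disjoint segments joining the $c_n$-vertices to the $c_{n+1}$-vertices. The paper sidesteps the gluing problem entirely by taking the K\"onig vertices to be the path systems themselves (the unions of $k$ disjoint $(S_0,C(S_i,\alpha))$-paths), with adjacency the subpath/truncation relation: a ray in that auxiliary graph is then a nested sequence of systems whose union is literally $k$ disjoint rays, and no matching across cuts is ever needed. Your instinct that this limit step is delicate is sound --- even the paper's own justification that its sets $\mathcal{V}_i$ are finite (``every path between $S_0$ and $C(S_i,\alpha)$ is contained in $S_{i+1}$'') glosses over paths detouring through other components of $G-S_i$ --- but it is the paper's nesting device, not boundary crossing data, that makes the limit coherent; configurations at the cuts discard precisely the information needed to splice the levels together.
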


\begin{proof}
(1) The necessity of the assertion is trivial since a (topological)
path of $G$ is also a curve of $|G|$. Now we prove the sufficiency
of the assertion. Suppose that $G$ has no $k$ vertex-disjoint
$(A,B)$-paths. By Menger's Theorem, there is a set $S\subseteq V(G)$
with $|S|<k$ such that $G-S$ has no $(A,B)$-path. If $|G|$ has $k$
vertex-disjoint $(A,B)$-curves, then one of them is contained in
$|G-S|$. It follows that some component of $G-S$ contains some
vertices of both $A$ and $B$, and thus $G-S$ has an $(A,B)$-path
(see also \cite{DiKu}), a contradiction.

(2) The necessity of the assertion is trivial since a ray in
$\alpha$ is a curve of $|G|$ ending in $\alpha$. Now we prove the
sufficiency of the assertion. Clearly any nontrivial curve ending in
$\alpha$ contains some vertices. For convenience we assume that
$|G|$ has $k$ vertex-disjoint curves between some vertices and
$\alpha$. Let $S_0$ be the set of the starting vertices of the $k$
curves. For $i\geq 1$, set $S_i=S_{i-1}\cup N(S_{i-1})$. Thus $S_i$
is finite and $|G|$ has $k$ vertex-disjoint curves between $S_0$ and
$C(S_i,\alpha)$, for all $i\geq 0$. By (1), $G$ has $k$
vertex-disjoint paths between $S_0$ and $C(S_i,\alpha)$. Let
$\mathcal{V}_i$ be the set of the unions of $k$ vertex-disjoint
paths between $S_0$ and $C(S_i,\alpha)$. Since every path between
$S_0$ and $C(S_i,\alpha)$ is contained in $S_{i+1}$, which is
finite, we can see that $\mathcal{V}_i$ is finite for every $i\geq
0$.

We define a graph $\mathcal{G}$ on
$\bigcup_{i=0}^\infty\mathcal{V}_i$ such that
$U_{i-1}\in\mathcal{V}_{i-1}$ is adjacent to $U_i\in\mathcal{V}_i$
if and only if the $k$ paths of $U_{i-1}$ are the subpaths of the
$k$ paths of $U_i$. Clearly every vertex in $\mathcal{V}_i$ has a
neighbor in $\mathcal{V}_{i-1}$. By Lemma \ref{LeKonig},
$\mathcal{G}$ has a ray $\mathcal{R}=U_0U_1U_2\ldots$ with
$U_i\in\mathcal{V}_i$, $i\geq 0$. It follows that
$\bigcup_{i=0}^\infty U_i$ is the union of $k$ vertex-disjoint rays
in $\alpha$, implying that the degree of $\alpha$ is at least $k$.
\end{proof}

\begin{lemma}\label{FaEndDegreek}
Let $T$ be a faithful spanning tree of $G$ and $F\subseteq E(T)$
such that every component of $T-F$ is finite. If for every edge
$e\in F$, $G$ has at most $k$ edges between the two components
$T_1,T_2$ of $T-e$, then every end of $G$ has degree at most $k$.
\end{lemma}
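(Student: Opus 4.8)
The plan is to argue by contradiction: assuming some end $\alpha$ has degree at least $k+1$, I will exhibit a single edge $e\in F$ across whose associated cut all $k+1$ disjoint rays are forced to pass, contradicting the bound of $k$ edges in that cut. The first step is to record the correct notion of a cut. For $e\in F$ with components $T_1,T_2$ of $T-e$, the edge set $E_G(V(T_1),V(T_2))$ is a \emph{finite} edge cut of $G$, of size at most $k$, that separates $V(T_1)$ from $V(T_2)$ in $G$; note that deleting the single tree edge $e$ does \emph{not} separate $G$, so it is essential to pass to this $G$-cut rather than to work inside $T$. Collapsing each (finite) component of $T-F$ to a point turns $T$ into a tree $\mathcal{T}$ whose edges are exactly the members of $F$, and every vertex of $G$ lies in one of these finite pieces.

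Next I would suppose $\alpha$ has degree $\ge k+1$ and fix vertex-disjoint rays $R_1,\dots,R_{k+1}\in\alpha$. Since $T$ is faithful, $\alpha$ contains a ray $R_T$ of $T$; projecting $R_T$ down to $\mathcal{T}$ yields a ray $C_0C_1C_2\cdots$ pointing $\mathcal{T}$ towards $\alpha$. For each $i$ the edge $e_i\in F$ joining $C_i$ to $C_{i+1}$ splits $T$ into a \emph{near side} (containing $C_0,\dots,C_i$) and a \emph{far side} (containing the tail of $R_T$, hence representing $\alpha$). Because the pieces are finite and, in the tree $\mathcal{T}$, every piece eventually lies on the near side as $i$ grows, the near sides form an increasing exhaustion of $V(G)$; I can therefore choose $i$ large enough that all $k+1$ starting vertices of the $R_j$ lie on the near side of $e_i$. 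Writing $S_i$ for the finite set of endpoints of the cut $D_i:=E_G(V(T_1),V(T_2))$ of $e_i$, the component $C_G(S_i,\alpha)$ lies entirely on the far side (it contains the far tail of $R_T$), so every $R_j\in\alpha$ has a tail on the far side as well. Thus each $R_j$ runs from the near side to the far side and must use an edge of $D_i$; since $V(G)$ splits disjointly into the two sides, such a transition edge is genuinely a cut edge. Vertex-disjointness of the rays forces these edges to be pairwise distinct, producing $k+1$ edges in $D_i$ and contradicting $|D_i|\le k$. Hence every end has degree at most $k$.

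I expect the main obstacle to be exactly the step that localizes $\alpha$ to one fixed side of the chosen cut while simultaneously trapping all $k+1$ starting vertices on the opposite side. This is where all the hypotheses are consumed: finiteness of the components of $T-F$ makes the contraction $\mathcal{T}$ a genuine tree with an exhausting sequence of near sides, and faithfulness of $T$ is what guarantees that $\alpha$ is represented by a $T$-ray and therefore has a well-defined far side in $\mathcal{T}$. Beyond this, the only remaining care is the routine verification that two vertex-disjoint rays cannot share a cut edge (their sharing it would force a common vertex), which is what converts ``each ray crosses'' into the needed $k+1$ \emph{distinct} edges.
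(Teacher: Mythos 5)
Your proof is correct, and while it shares the paper's overall plan (force $k+1$ vertex-disjoint rays of a single end across one cut of size at most $k$), the execution of the key ``trapping'' step is genuinely different. The paper works inside $T$ itself: it takes the end $\alpha_T$ of $T$ containing a $T$-ray $R\in\alpha_G$, and proves --- via the Star-Comb Lemma (Lemma \ref{LeComb}) combined with the second faithfulness condition ($R'\approx_GR''$ implies $R'\approx_TR''$) --- that every ray of $\alpha_G$ has all but finitely many vertices in $C_T(S_1,\alpha_T)$, where $S_1$ is the union of the pieces of $T-F$ meeting a finite subtree containing the starting vertices; the unique tree edge $e\in F$ between $S_1$ and $C_T(S_1,\alpha_T)$ then yields the contradiction. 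You sidestep that entire mechanism: by taking $S_i$ to consist of \emph{both} endpoints of every edge of the finite cut $D_i$, no component of $G-S_i$ can meet both sides of the cut, so $C_G(S_i,\alpha)$ --- which contains a tail of every ray of $\alpha$ by the mere definition of ends of $G$ --- is trapped on the far side as soon as it contains the far tail of $R_T$. This buys you two things: you never need the Star-Comb Lemma, and you use only condition (1) of faithfulness (every end of $G$ contains a ray of $T$), never condition (2), so your argument in fact proves a formally stronger lemma. The cost is the auxiliary contraction $\mathcal{T}$ and two small facts that a full write-up should spell out: that the projection of $R_T$ to $\mathcal{T}$ is a ray (a walk in a tree repeating no edge is a path, and the projection is infinite because the pieces are finite), and that the near sides are nested and exhaust $V(G)$ (any piece lies on the near side of $e_i$ once $i$ passes the vertex of the ray of $\mathcal{T}$ at which that piece attaches). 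Both proofs finish identically: vertex-disjointness makes the $k+1$ crossing edges pairwise distinct, contradicting $|D_i|\le k$.
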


\begin{proof}
Let $\alpha_G$ be an arbitrary end of $G$, $R$ be a ray of $T$
contained in $\alpha_G$, and $\alpha_T$ be the end of $T$ containing
$R$. We first claim that for every ray $R'\in\alpha_G$ and every
finite subtree $T_0$ of $T$, the component $C=C_T(V(T_0),\alpha_T)$
contains almost all vertices of $R'$. Suppose otherwise that $R'$
has infinitely many vertices contained in $T-C$. Note that $T-C$ is
connected. By Lemma \ref{LeComb}, $T-C$ has a comb with all teeth in
$V(R')$. Let $R''$ be the spine of the comb. It follows that $R''$
is a ray of $T$ and $R'\approx_GR''$. Since $R\approx_GR'$,
$R\approx_GR''$. Since $T$ is faithful to $G$, $R\approx_TR''$,
contradicting the fact that $R''$ has no tail in $C$.

Now we prove the lemma. Let $\alpha_G,\alpha_T$ be defined as above.
Suppose that $\alpha_G$ has degree at least $k+1$. Let $S_0$ be the
starting vertices of $k+1$ vertex-disjoint rays in $\alpha$, $T_0$
be a subtree of $T$ containing $S_0$ and $\mathcal{H}$ be the set of
the components $H$ of $T-F$ with $V(H)\cap V(T_0)\neq\emptyset$. Set
$S_1=\bigcup_{H\in\mathcal{H}}V(H)$, and $T_1=T[S_1]$. Clearly
$S_1\supseteq S_0$ is finite and $T_1$ is a finite subtree of $T$.
Recall that every ray in $\alpha_G$ contains some vertices of
$C_T(S_1,\alpha_T)$. It follows that $G$ has $k+1$ vertex-disjoint
paths between $S_1$ and $C_T(S_1,\alpha_T)$. Let $e$ be the unique
edge of $T$ between $S_1$ and $C_T(S_1,\alpha_T)$. Clearly $e\in F$
and thus $E_G(S_1,C_T(S_1,\alpha_T))\leq k$, a contradiction.
\end{proof}

\subsection{Hamiltonian curves and Hamiltonian circles}

In \cite{KuLiTh}, the authors obtained some necessary and sufficient
conditions for a graph $G$ to have a Hamiltonian curve. We list one
of the conditions which we will use in our paper.

\begin{theorem}[K\"undgen et al.
\cite{KuLiTh}]\label{ThHamiltonianCurve} The graph $G$ has a
Hamiltonian curve if and only if every finite set $S\subseteq V(G)$
is contained in a cycle of $G$.
\end{theorem}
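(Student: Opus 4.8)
The plan is to prove the two directions separately: necessity by a Menger-type reduction of the curve to a finite cycle, and sufficiency by a compactness/König limit of cycles. Throughout I assume, as in this section, that $G$ is connected and locally finite, so that $|G|$ is a compact Hausdorff space.

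For necessity, suppose $\sigma:S^1\to|G|$ is a Hamiltonian curve and $S=\{s_1,\dots,s_m\}$ is finite. Since $\sigma$ meets each vertex exactly once, the points of $S$ occur along $S^1$ in a well-defined cyclic order, say $s_1,\dots,s_m$ at parameters $t_1,\dots,t_m$; for each $i$ the restriction of $\sigma$ to the arc from $t_i$ to $t_{i+1}$ (indices mod $m$) is a curve from $s_i$ to $s_{i+1}$ whose interior avoids $S$, and these $m$ curves are pairwise vertex-disjoint apart from the shared consecutive endpoints. I would then convert this family of internally $S$-avoiding curves into internally $S$-avoiding \emph{paths} of $G$ joining the same pairs, using the curve-to-path correspondence of Lemma \ref{FaDegreeArc}(1), and concatenate them into a cycle of $G$ through all of $S$. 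The delicate point in this otherwise easy direction is that the paths obtained be pairwise internally disjoint, so that the concatenation is a genuine cycle and not merely a closed walk; I would secure this by applying Lemma \ref{FaDegreeArc}(1) to the whole family at once in an auxiliary graph that encodes the required pairing of the $s_i$, rather than segment by segment.

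For sufficiency, assume every finite subset of $V(G)$ lies on a cycle. Since $G$ is countable I fix an exhaustion $S_0\subseteq S_1\subseteq\cdots$ of $V(G)$ by finite sets and choose, by hypothesis, a cycle $C_n$ with $S_n\subseteq V(C_n)$ for each $n$. Each $C_n$ traverses its vertices cyclically and so induces a cyclic order on $S_n$, which for $m\le n$ restricts to a cyclic order of $S_m$. As $S_m$ admits only finitely many cyclic orders, I apply König's Infinite Lemma (Lemma \ref{LeKonig}) to the tree of compatible finite cyclic orders to pass to a subsequence along which the induced cyclic order of each $S_m$ is eventually constant; the limits fit together into a single cyclic order $\preceq$ of $V(G)$. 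From $\preceq$ I build the curve by placing the vertices on $S^1$ in this order, keeping each vertex at a fixed position once placed and inserting later vertices into the gaps, with positions chosen so that their limit points occur exactly where $\preceq$ has no immediate successor. At a vertex position the curve takes that vertex; on the open arc between two $\preceq$-consecutive vertices it runs along the corresponding edge of $G$; and at a limit position it takes the end of $G$ to which the neighbouring vertices converge.

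The hard part is the verification that this prescription yields a continuous map $S^1\to|G|$ meeting every vertex exactly once, and it splits into two linked issues. First, one must know that $\preceq$-consecutive vertices are genuinely adjacent in $G$ and that each gap of $\preceq$ corresponds to a single end, since a priori the cyclic-order limit could leave two non-adjacent vertices consecutive, and then no continuous curve avoiding all other vertices could join them. I expect to resolve this by exploiting local finiteness together with the refinement structure of the $\preceq$-orders: around each gap the vertices immediately preceding and following it, taken over all levels, accumulate in the compact space $|G|$, and because $G$ is locally finite every such accumulation is at an end, which forces consecutive vertices to be joined by edges and each gap to be filled by a well-defined end. Second, one must check continuity at these end-points and injectivity on vertices; here I would argue exactly as in the proof of Lemma \ref{FaFaithfulCircle}, showing that for every finite $S$ the basic neighbourhood $\widehat{C}(S,\alpha)$ eventually contains the relevant tail of the construction. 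Establishing this continuity-and-injectivity at the ends is where the real work lies; the combinatorial extraction of $\preceq$ and the finite-cycle direction are comparatively routine.
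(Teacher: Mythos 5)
First, a point of comparison: the paper does not prove this statement at all --- it is quoted as a known theorem from \cite{KuLiTh} and used as a black box --- so your proposal can only be judged on its own merits, not against an argument in the paper.

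On those merits, the necessity direction has a genuine gap. Your plan is to turn the $m$ curve segments between consecutive vertices of $S$ into internally disjoint paths of $G$ by ``applying Lemma~\ref{FaDegreeArc}(1) to the whole family at once in an auxiliary graph that encodes the required pairing''. Lemma~\ref{FaDegreeArc}(1) is Menger-type: it controls the \emph{number} of disjoint paths between two sets and gives no control over which endpoint gets joined to which. What you need is a \emph{linkage} (disjoint paths with a prescribed pairing), and linkages are not characterized by cut conditions, so no auxiliary graph can encode the pairing into a Menger instance; if such a reduction existed, one could decide by a flow computation whether a finite graph has a cycle through a given vertex set, but that problem contains the Hamiltonian cycle problem as the case $S=V(G)$. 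Nor can you fall back on converting each segment into a path inside its own ``territory'': the vertices met by a curve between $s_i$ and $s_{i+1}$ need not span a connected subgraph. For example, take three rays $a_0a_1\ldots$, $b_0b_1\ldots$, $c_0c_1\ldots$ with rungs $a_nc_n$ and $b_nc_n$; the arc running up the $a$-ray, through the unique end, and down the $b$-ray joins $a_0$ to $b_0$, but its vertex set $\{a_n\}\cup\{b_n\}$ contains no $a_0$--$b_0$ path of $G$. Converting the topological cyclic linkage into a graph cycle is exactly the hard content of this direction, and the proposal contains no mechanism for it.

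The sufficiency direction is closer to workable, but it too has a gap precisely where you place the ``real work''. Extracting only the cyclic orders of the sets $S_m$ discards the information of \emph{which component of $G-S_m$} each connecting segment of $C_n$ runs through. The adjacency of $\preceq$-consecutive vertices does follow from local finiteness (for large $n$ all neighbours of $u$ and $v$ lie in $S_n$, so a segment of $C_n$ internally avoiding $S_n$ must be the edge $uv$), so that half of your first issue is fine. But the claim that each gap of $\preceq$ is ``filled by a well-defined end'' does not follow from compactness of $|G|$ alone: a priori the vertices approaching a gap from the left and from the right could accumulate at two \emph{different} ends, and then no continuous curve realizes the order. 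Ruling this out requires using the cycles again, e.g.\ by showing that if vertices from two distinct components of $G-T$ interleave near a gap, then any cycle inducing that order must cross the finite separator $T$ more often than $|T|$ permits. The standard remedy is to run K\"onig's lemma on richer finite data than the cyclic order: the trace of $C_n$ on the finite multigraph obtained by contracting the components of $G-S_m$, so that each gap comes equipped with a nested sequence of components, hence with an end, after which continuity can indeed be checked against the neighbourhoods $\widehat{C}(S,\alpha)$ as you indicate. As written, your proposal asserts this key point rather than proving it.
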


Clearly if a Hamiltonian curve meets every end exactly once, then it
is also a Hamiltonian circle.

\begin{lemma}\label{FaCurveCircle}
If every end of $G$ has degree at most 3, then every Hamiltonian
curve of $G$ is also a Hamiltonian circle.
\end{lemma}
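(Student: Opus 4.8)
The plan is to reduce the statement, via the remark immediately preceding it, to showing that under the degree hypothesis a given Hamiltonian curve $\sigma\colon S^1\to|G|$ meets every end of $G$ exactly once. First I would dispose of the easy direction. Since $\sigma$ is continuous and $S^1$ is compact, its image is a compact, hence closed, subset of the compact Hausdorff space $|G|$. This image contains all of $V(G)$, and the vertices accumulate at every end of $G$, so every end lies in the closure of $V(G)$ and therefore in the image of $\sigma$. Thus $\sigma$ already meets every end at least once, and the entire content of the lemma is to prove that $\sigma$ meets each end \emph{at most} once, i.e.\ that $\sigma$ is injective on the ends.

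For the main step I would argue by contradiction. Suppose some end $\alpha$ is met at two distinct parameters $p_1\neq p_2$ with $\sigma(p_1)=\sigma(p_2)=\alpha$. These two points cut $S^1$ into two arcs $A_1,A_2$, each having $p_1$ and $p_2$ as its endpoints. The goal is to manufacture four pairwise vertex-disjoint nontrivial curves ending in $\alpha$ and then to apply Lemma \ref{FaDegreeArc}(2), which would force the degree of $\alpha$ to be at least $4$, contradicting the hypothesis that every end has degree at most $3$.

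To build the four curves I would work near the two points. Approaching $p_1$ from within $A_1$ and from within $A_2$, continuity forces $\sigma(t)\to\alpha$, so restricting $\sigma$ to a short closed sub-arc on each side of $p_1$ yields two curves $g_1\subseteq A_1$ and $g_2\subseteq A_2$, each ending at $\alpha$; the same construction at $p_2$ gives $g_3\subseteq A_1$ and $g_4\subseteq A_2$. If the sub-arcs are chosen short enough, the two lying in $A_1$ are disjoint in $S^1$, and likewise for those in $A_2$, while sub-arcs from different $A_i$ meet only in the points $p_1,p_2$, both of which map to the end $\alpha$ rather than to a vertex. Because $\sigma$ meets every vertex exactly once, sub-arcs that are disjoint in $S^1$ carry disjoint vertex sets, so $g_1,g_2,g_3,g_4$ are pairwise vertex-disjoint. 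As each ends in $\alpha$, Lemma \ref{FaDegreeArc}(2) then yields the desired contradiction.

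The hard part will be the topological bookkeeping hidden in the phrase ``short sub-arc on each side''. To guarantee that each $g_i$ is a genuine nontrivial curve ending in $\alpha$, I must control the set $\sigma^{-1}(\alpha)$: if $\sigma$ lingers at $\alpha$ (is constant equal to $\alpha$ on a whole sub-arc) or oscillates so that $\alpha$ is not isolated on one side of some $p_i$, a naive germ could collapse to a point. The clean remedy is to choose $p_1,p_2$ as points where $\sigma$ genuinely leaves $\alpha$ on both sides, that is, as endpoints of two distinct complementary arcs of $\sigma^{-1}(\alpha)$ carrying vertices, equivalently to pass to the image of $\sigma$ and collapse any idle intervals, so that distinct passes through $\alpha$ correspond to distinct excursions into $G$, each excursion supplying two germs. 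One then checks that each germ reaches into $C(S,\alpha)$ for every finite $S\subseteq V(G)$, so that it is nontrivial and genuinely ends in $\alpha$. With this normalization the count becomes ``degree of $\alpha\ \geq\ 2\times(\text{number of passes})$'', so the hypothesis of degree at most $3$ leaves room for only one pass; hence $\sigma$ meets $\alpha$ exactly once, and being injective on the ends it is, by the remark above, a Hamiltonian circle.
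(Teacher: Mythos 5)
Your proposal is correct and takes essentially the same approach as the paper: the paper also argues by contradiction that a Hamiltonian curve meeting an end $\alpha$ twice would yield four vertex-disjoint curves ending in $\alpha$, and then invokes Lemma \ref{FaDegreeArc}(2) to contradict the degree-3 hypothesis. The paper compresses all of your topological bookkeeping (the four germs at the two visits, and the degenerate ``lingering'' cases) into the single assertion that such four curves ``clearly'' exist, so your more careful treatment is an elaboration of the same argument rather than a different route.
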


\begin{proof}
It sufficient to show that the Hamiltonian curve passes through each
end exactly once. Suppose that it passes through an end $\alpha$ at
least twice. It is clearly that $G$ has four vertex-disjoint curves
ending in $\alpha$. By Lemma \ref{FaDegreeArc}, $\alpha$ has degree
at least 4, a contradiction.
\end{proof}

\begin{theorem}\label{ThFaithfulHamiltonian}
Let $T$ be a faithful spanning tree of $G$, and $F\subseteq E(T)$
such that every component of $T-F$ is finite. Suppose that for every
subtree $T'$ of $T$, $G$ has a cycle $C'$ such that
\begin{mathitem}
\item $V(T')\subseteq V(C')$, and
\item $C'$ crosses each edges in $F\cap E(T')$ exactly twice respect to $T$.
\end{mathitem}
Then $G$ has a Hamiltonian circle.
\end{theorem}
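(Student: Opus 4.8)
The plan is to build the Hamiltonian circle directly as a limit of the finite cycles supplied by the hypothesis, using condition $(2)$ to control the behaviour at the ends. As a warm-up that isolates the easy half, note that for any finite $S\subseteq V(G)$ the minimal subtree $T'$ of $T$ spanning $S$ is finite, so the hypothesis yields a cycle $C'$ with $S\subseteq V(T')\subseteq V(C')$; thus every finite vertex set lies on a cycle and, by Theorem~\ref{ThHamiltonianCurve}, $G$ has a Hamiltonian curve. Such a curve may, however, pass through an end several times, and the end-degrees of $G$ need not be bounded (for instance $G=P_\infty\times K_4$ with the obvious $T,F$ satisfies the hypotheses yet has an end of degree $4$), so Lemma~\ref{FaCurveCircle} is not available in general. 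The content of the theorem is therefore to produce a curve meeting each end exactly once, and this is exactly what the crossing condition $(2)$ is designed to enforce.

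Before taking the limit I would record the key structural fact that every $F$-cut $E_G(T_1,T_2)$, $e\in F$, is finite. Indeed, if some such cut were infinite then, $G$ being locally finite, both sides would carry infinitely many endpoints of cut-edges; applying the Star--Comb Lemma (Lemma~\ref{LeComb}) inside $T_1$ and inside $T_2$ would produce a ray $R_1$ in $T_1$ and a ray $R_2$ in $T_2$ joined by infinitely many disjoint paths, whence $R_1\approx_G R_2$, while $R_1$ and $R_2$ are separated in $T$ by the two endpoints of $e$, so $R_1\not\approx_T R_2$, contradicting the faithfulness of $T$. With finite cuts in hand I would fix an exhaustion $T_1\subseteq T_2\subseteq\cdots$ of $T$ by finite subtrees, each a union of components of $T-F$ and with $\bigcup_n T_n=T$, and apply the hypothesis to obtain cycles $C_n$ covering $V(T_n)$ and crossing each $F$-edge of $T_n$ exactly twice.

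Because every $F$-cut is finite, the restriction of the $C_n$ to any fixed finite region has only finitely many possible patterns (which cut-edges are used, and how the trace runs), and these patterns are organised by inclusion into a finitely branching tree; König's Infinite Lemma (Lemma~\ref{LeKonig}) then extracts a coherent limiting system $Z$ of vertices and edges. Condition $(2)$ forces $Z$ to use exactly two edges of each $F$-cut, so $Z$ is $2$-regular and spans $V(G)$; reading off the cuts along the approach to an end $\alpha$, the two cut-edges at each successively deeper $F$-edge show that exactly two tails of $Z$ converge to $\alpha$, so the closure of $Z$ in $|G|$ passes through $\alpha$ exactly once. Since $Z$ traverses each edge of the quotient tree on the components of $T-F$ exactly twice, its projection is the connected Euler tour of that tree, which glues the double-ray pieces of $Z$ through the ends into a single closed curve meeting every vertex once and every end exactly once, i.e.\ a Hamiltonian circle of $G$.

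The main obstacle is this final step: turning the local count ``exactly twice'' into the global topological conclusion that $Z$ is a homeomorphic image of $S^1$ which is injective on vertices and meets every end exactly once, rather than a disjoint union of several circles or a curve revisiting an end. Concretely, one must verify that the double-ray pieces of $Z$ glue up through the ends into a single circle and that no end accumulates four tails. The finiteness of the $F$-cuts is the fact that makes the König compactness argument run, and the ``exactly twice'' condition is precisely the input guaranteeing a single passage through each end; assembling these into a clean proof that the limit is a genuine circle is where the real work lies.
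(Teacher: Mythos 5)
Your proposal assembles the right ingredients (a compactness extraction from the cycles $C_n$, and the observation that the ``exactly twice'' condition must control behaviour at the ends), but it does not prove the theorem: the step you yourself flag as ``where the real work lies'' is precisely the content of the theorem, and it is left unproven. Concretely, the inference that the limit system $Z$ --- a $2$-regular spanning subgraph using exactly two edges of each $F$-cut --- ``glues up through the ends into a single closed curve'' does not follow from those local conditions. Two-regularity plus two crossings per cut does not even imply connectedness: let $T$ be the path $a_1a_2a_3a_4b_1b_2b_3b_4b_5$, let $F=\{a_4b_1\}$, let $G=T+\{a_1b_1,b_2b_5\}$, and let $Z$ be the $5$-cycle $a_1a_2a_3a_4b_1a_1$ together with the $4$-cycle $b_2b_3b_4b_5b_2$; then $Z$ is $2$-regular, spanning, and crosses the unique $F$-cut exactly twice, yet it is disconnected. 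This is not a removable pathology in the infinite setting: a limit of cycles is typically a disjoint union of double rays, and whether their closures link up through the ends into one circle (rather than several circles, or a curve revisiting an end) is exactly the topological difficulty. Your text asserts this conclusion (``its projection is the connected Euler tour \ldots'') rather than deriving it, and nothing in your machinery (K\"onig's Lemma \ref{LeKonig}, finiteness of the $F$-cuts via Lemma \ref{LeComb}) supplies the missing topological connectivity or the injectivity at ends.

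The paper closes this gap by never analyzing the limit object directly. After a nested-subsequence extraction (equivalent in spirit to your K\"onig step) that stabilizes, for each finite region $S_i$, the set $E^i$ of cut edges used by the cycles, it forms the auxiliary spanning subgraph $G'$ with edge set $E(T)\cup\bigcup_i E(C_0^i)$. For each $e\in F'=\bigcup_i F^i$, the graph $G'$ has at most three edges across the cut of $e$ (namely $e$ itself plus the two stabilized cycle edges), so Lemma \ref{FaEndDegreek} bounds every end degree of $G'$ by $3$; every finite vertex set of $G'$ lies on one of the cycles $C_0^i$, so $G'$ has a Hamiltonian curve by Theorem \ref{ThHamiltonianCurve}; and by Lemma \ref{FaCurveCircle} any such curve is automatically a Hamiltonian circle because an end of degree at most $3$ cannot be visited twice. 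Faithfulness (Lemmas \ref{FaFaithfulSpanning} and \ref{FaFaithfulCircle}) then transfers the circle from $G'$ to $G$. In other words, the gluing-at-ends problem you leave open is outsourced to Theorem \ref{ThHamiltonianCurve} and converted, via the degree-$3$ bound, into a purely combinatorial statement about end degrees. To salvage your direct approach you would instead need a Bruhn--Stein type characterization of circles (topological connectedness together with degree exactly two at every vertex and end of the closure) and a genuine proof that your $Z$ satisfies it, including connectivity --- which, as the example above shows, must come from somewhere beyond the local counting conditions you establish.
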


\begin{proof}
Let $V(G)=\{v_i: i\geq 0\}$. For $i\geq 0$, let $T_i$ be a subtree
of $T$ containing all vertices of $\{v_0,\ldots,v_i\}$, and $C_i$ be
a cycle of $G$ with $V(T_i)\subseteq V(C_i)$ and $C_i$ crosses each
edges in $F\cap E(T_i)$ exactly twice respect to $T$. Set
$\mathcal{C}=(C_i)_{i=0}^\infty$. In the following, we will define a
sequence of infinite subsequences of $\mathcal{C}$, a sequence of
finite subsets of $E(G)$, and a sequence of finite subsets of $F$.

First let $\mathcal{C}^0=(C_i^0)_{i=0}^\infty=\mathcal{C}$ and
$E^0=F^0=\emptyset$. Suppose now we have already defined
$\mathcal{C}^{i-1}$, $E^{i-1}$ and $F^{i-1}$.

Consider the first cycle $C_0^{i-1}$ in $\mathcal{C}^{i-1}$. Let
$\mathcal{H}_i$ be the set of components $H$ of $T-F$ such that
$V(H)\cap V(C_0^{i-1})\neq\emptyset$. Set
$S_i=\bigcup_{H\in\mathcal{H}_i}V(H)$, $S'_i=S_i\cup N(S_i)$,
$E_i=E_G(S_i,G-S_i)$, and $F^i=E_T(S_i,G-S_i)$. Clearly
$F^i\subseteq F$. Since $C_0^{i-1}$ is a cycle and each component of
$T-F$ is finite, we see that $S_i$ is finite. Since $G$ is locally
finite, we have that $S'_i$, $E_i$ and $F^i$ are finite.

Note that there are only finitely many cycles in $\mathcal{C}$ (and
then, in $\mathcal{C}^{i-1}$) that does not contain all vertices in
$S'_i$. It follows that there are infinitely many cycles in
$\mathcal{C}^{i-1}$ that contains all vertices of $S'_i$. Recall
that $E_i$ is finite, and has only finitely many of subsets. So
there is a set $E^i\subseteq E_i$ such that for infinitely many
cycles $C$ in $\mathcal{C}^{i-1}$, $E(C)\cap E_i=E^i$. Let
$\mathcal{C}^i$ be the subsequence of $\mathcal{C}^{i-1}$ consists
of all the cycles $C$ with $S'_i\subseteq V(C)$ and $E(C)\cap
E_i=E^i$.

From the above construction, we can see that every cycle in
$\mathcal{C}^i$ containing all vertices of $C_0^{i-1}$, and for
$i,j\geq 0$,
$$E_i\cap E(C_0^j)=\left\{\begin{array}{ll}
  \emptyset, & \mbox{if }j<i;\\
  E^i, & \mbox{if }j\geq i.
\end{array}\right.$$
Recall that every cycle in $\mathcal{C}^i$ (and then $C_0^i$)
crosses every edge in $F^i$ exactly twice respect to $T$. It follows
that for any edge $e$ of $F^i$, $E^i$ contains exactly two edges in
$E_G(T_1,T_2)$, where $T_1,T_2$ are the two components of $T-e$.

Now let $F'=\bigcup_{i=0}^\infty F^i$ and $G'$ be the spanning
subgraph of $G$ with edge set
$$E(G')=E(T)\cup\bigcup_{i=0}^\infty E(C_0^i).$$ It follows that for
every edge $e\in F'$, $G'$ has at most three edges between the two
components of $T-e$.

We claim that every component of $T-F'$ is finite. Suppose otherwise
that there is an infinite component $H'$ of $T-F'$. Let $v\in
V(H')$. Note that there are only finitely many of cycles in
$\mathcal{C}$ not containing $v$. It follows that there exists $i$
with $v\in V(C_0^{i-1})$. Let $S_i$ be defined as above. Since $S_i$
is finite, there is some edge in $E_T(S_i,G-S_i)\cap E(H')$, which
is contained in $F^i$, a contradiction. Thus we conclude that every
component of $T-F'$ is finite.

By Lemma \ref{FaEndDegreek}, every end of $G'$ has degree at most 3.
Clearly every finite subset of $V(G')$ is contained in a cycle of
$G'$. By Theorem \ref{ThHamiltonianCurve}, $G'$ has a Hamiltonian
curve. By Lemma \ref{FaCurveCircle}, $G'$ has a Hamiltonian circle.
By Lemma \ref{FaFaithfulSpanning}, $G'$ is faithful to $G$. By Lemma
\ref{FaFaithfulCircle}, $G$ has a Hamiltonian circle.
\end{proof}

\subsection{Proof of Theorem \ref{ThInfinite}}

Let $M$ be a perfect matching of $G$. We define a graph
$\mathcal{G}$ on $M$ such that for any two edges $e_1,e_2\in M$,
$e_1e_2\in E(\mathcal{G})$ if and only if $G$ has an edge between
$e_1$ and $e_2$. Clearly $\mathcal{G}$ is connected and locally
finite. By Theorem \ref{ThNormalTree}, $G$ has a normal tree
$\mathcal{T}$, which is faithful to $\mathcal{G}$ by Lemma
\ref{LeNormalFaithful}. By Lemma \ref{FaFaithfulTree}, $G$ has a
faithful spanning tree $T$ containing all edges in $M$. By Lemma
\ref{FaFaithfulPower}, $T$ is faithful to $T_B^3$.

Let $F=E(T)\backslash M$. So every component of $T-F$ consists an
edge in $M$. Let $T'$ be an arbitrary subtree of $T$. By Lemma
\ref{LeFiniteHamiltonian}, $(T')_B^3$ has a Hamiltonian cycle $C'$
that crosses every edge in $F\cap E(T')$ exactly twice respect to
$T'$ (and then respect to $T$ since $C'$ contains no vertices
outsides $T'$). By Theorem \ref{ThFaithfulHamiltonian}, $T_B^3$ has
a Hamiltonian circle.

By Lemmas \ref{FaFaithfulFaithful} and \ref{FaFaithfulPower}, $T$ is
faithful to $G_B^3$. By Lemma \ref{FaFaithfulSpanning}, $T_B^3$ is
faithful to $G_B^3$. By Lemma \ref{FaFaithfulCircle}, $G_B^3$ has a
Hamiltonian circle.

The proof is complete.

\end{document}